\crefname{equation}{}{}
\pgfplotsset{compat = newest}
\tikzset{>={Latex[width=3mm,length=3mm]}}
\tikzstyle{B1} = [rectangle, rounded corners, minimum width=2.2cm, minimum height=1.4cm,text centered, draw=black, fill=MidnightBlue!3, text width=3.9cm]
\tikzstyle{B11} = [rectangle, rounded corners, minimum width=2.2cm, minimum height=1.4cm,text centered, draw=black, fill=MidnightBlue!3, text width=9cm]
\tikzstyle{B0} = [rectangle, rounded corners, minimum width=2.2cm, minimum height=1.4cm,text centered, draw=black, fill=MidnightBlue!3, text width=8.8cm]
\tikzstyle{B00} = [rectangle, rounded corners, minimum width=2.2cm, minimum height=1.4cm,text centered, draw=black, fill=MidnightBlue!3, text width=11.6cm]
\tikzstyle{B000} = [rectangle, rounded corners, minimum width=2.2cm, minimum height=1.4cm,text centered, draw=black, fill=MidnightBlue!3, text width=5cm]
\tikzstyle{blue0} = [rectangle, rounded corners, minimum width=2.2cm, minimum height=1.4cm,text centered, draw=black, fill=MidnightBlue!3, text width=2.1cm]
\tikzstyle{blue} = [rectangle, rounded corners, minimum width=3.6cm, minimum height=1.4cm,text centered, draw=black, fill=MidnightBlue!3, text width=3cm]
\tikzstyle{blue1} = [rectangle, rounded corners, minimum width=6cm, minimum height=1.4cm,text centered, draw=black, fill=MidnightBlue!3, text width=6.2cm]
\tikzstyle{arrow} = [thick,->]
\numberwithin{equation}{section}
\theoremstyle{plain}
\newtheorem{thrm}{Theorem}[section]
\theoremstyle{definition}
\newtheorem{rmrk}[thrm]{Remark}
\theoremstyle{plain}
\newcommand{\xvec}[1]{\bm{#1}}
\newcommand{\xsym}[1]{\bm{#1}}
\newcommand{\xdop}[1]{\bm{\mathrm{#1}}}
\def\xnab{\xdop{\nabla}}
\newcommand{\xwcurl}[1]{\xdop{\nabla}\wedge{#1}}
\newcommand{\xdiv}[1]{\xdop{\nabla}\cdot{#1}}
\newcommand{\xdx}[1]{{{\rm d}#1}}
\def\xdrv#1#2{\frac{{\rm d}#1}{{\rm d}#2}}
\def\cf{\emph{cf.\/}}\def\ie{\emph{i.e.\/}}\def\eg{\emph{e.g.\/}}
\def\@Rref#1{\hbox{\rm \ref{#1}}}
\def\Rref#1{\@Rref{#1}}
\def\xCzero{{\rm C}^{0}}
\def\xCinfty{{\rm C}^{\infty}} 
\def\xH{{\rm H}}
\def\bH{{\mathbf{H}}}
\def\xHone{{\rm H}^{1}}
\def\xHtwo{{\rm H}^{2}} 
\def\xHn#1{{\rm H}^#1}
\def\xLtwo{{\rm L}^{2}}
\def\xLfour{{\rm L}^{4}}  
\def\xLinfty{{\rm L}^{\infty}} 
\def\xLn#1{{\rm L}^#1}
\def\xX{{\rm X}}
\newcommand*{\toccontents}{\@starttoc{toc}}
\begin{document}
	\date{}
	
	\author{Manuel~Rissel\,\protect\footnote{NYU-ECNU Institute of Mathematical Sciences at NYU Shanghai, 3663 Zhongshan Road North, Shanghai, 200062, China, e-mail: \href{mailto:Manuel.Rissel@nyu.edu}{Manuel.Rissel@nyu.edu}}}
	
	\title{Global controllability of Boussinesq channel flows only through the temperature}
	
	\maketitle
	
	\begin{abstract}
		We show the global approximate controllability of the Boussinesq system with viscosity and diffusion in a planar periodic channel by using only a temperature control supported in a thin strip. At the walls, a slip boundary condition is chosen for the fluid and the normal derivative of the temperature is assumed to vanish. This contributes a first global controllability result of such type for the Boussinesq system in the presence of non-periodic boundary conditions. We resort to a small-time scaling argument to control the vorticity through a large initial temperature. Moreover, relying on the special choice of the domain, we employ J.-M. Coron's return method in order to steer the temperature without significantly impacting the vorticity.
		
		\quad
		
		\begin{center}
			\textbf{Keywords} \\ Boussinesq system, incompressible fluids, approximate controllability
			\\
			\quad
			\\
			{\bf MSC2020} \\ 35Q30, 35Q49, 76B75, 80A19, 93B05, 93C10
		\end{center}
		
		\quad
		
	\end{abstract}
	
	\section{Introduction}
	The global (large data) approximate controllability of the Boussinesq system by using solely a localized temperature control was recently tackled in \cite{NersesyanRissel2024} when the domain is the $2$D flat torus. This demonstrates for the periodic setting that, given any initial configuration, the system can approximately reach any target in arbitrary time if one acts with an appropriate physically localized force on the temperature. However, the existing literature on the controllability of the Boussinesq system is in large parts concerned with domains that have boundaries, and for these cases the question of global controllability only through the temperature has remained unanswered. The present article provides now the existence of at least one setting with boundaries for which global controllability properties can be achieved by using merely a localized temperature control. More specifically, we prove the global approximate controllability of the Boussinesq system in a straight thermally insulated periodic channel with a slip boundary condition for the fluid, and driven by an interior temperature control supported in a strip of nonzero
	width. While being far from general, this setup is not chosen just for sake of simplicity; see also \Cref{remark:otherbc}. Overall, the main challenges of the considered controllability problem are as follows: 1) the control can only act on the temperature equation; 2) the control should be localized; 3) initial and target states can be far away in the state space; 4) the temperature enters the momentum equation only in the direction of gravity; 5) the control mechanism has to be designed in accordance with the boundary conditions. Our approach rests on a scaling argument to control the vorticity trough the initial temperature. Furthermore, we employ the return method (\cf~\cite[Part 2, Chapter 6]{Coron2007}) for steering the temperature. From the viewpoint of applications, the Boussinesq system can be relevant, \eg, to the study of geophysical phenomena, Rayleigh-B\'enard convection, heating, and ventilation (\cf~\cite{AbergelTemam1990,Getling1998}).
	
	We consider viscous incompressible Newtonian flows under Boussinesq heat effects in the channel $\mathscr{C} \coloneq (-1, 1)\times\mathbb{T}$, where $\mathbb{T} \coloneq \mathbb{R} / 2\pi \mathbb{Z}$. The fluid is assumed to slip without friction along the solid boundary $\Gamma \coloneq \partial \mathscr{C} = \{-1,1\}\times\mathbb{T}$, while for the temperature we prescribe the zero Neumann boundary condition at~$\Gamma$. To influence the dynamics, only external heating/cooling, realized by an interior control $\eta \colon \omegaup\times (0,T) \longrightarrow \mathbb{R}$, can be applied in a control zone of the form
	\[
		\omegaup \coloneq [-1,1] \times [a, b], \quad 0 \leq a < b < 2\pi.
	\]
	The velocity $\xvec{u}\colon\mathscr{C}\times[0,T] \longrightarrow \mathbb{R}^2$, temperature $\theta\colon\mathscr{C}\times[0,T] \longrightarrow \mathbb{R}$, and exerted pressure $p \colon\mathscr{C}\times[0,T] \longrightarrow \mathbb{R}$ are governed by the controlled Boussinesq system
	\begin{equation}\label{equation:Boussinesq}
		\begin{gathered}
			\partial_t \xvec{u} - \nu \Delta \xvec{u} + \left(\xvec{u} \cdot \xnab\right) \xvec{u} + \xnab p = \theta \xsym{\mathscr{g}} + \xsym{\Phi}, \quad \xdiv{\xvec{u}} = 0, \\ 
			\partial_t \theta - \tau \Delta \theta + (\xvec{u}\cdot\xnab)\theta = \mathbb{I}_{\omegaup} \eta + \psi, \\
			\xvec{u}|_{\Gamma} \cdot \xvec{n} = 0, \quad (\xwcurl{\xvec{u}})|_{\Gamma} = 0, \quad \partial_{\xvec{n}}\theta |_{\Gamma} = 0,  \quad \xvec{u}(\cdot, 0) = \xvec{u}_0, \quad \theta(\cdot, 0) = \theta_0
		\end{gathered}
	\end{equation}
	where $\xvec{n}$ is the outward unit normal at $\Gamma$, the constant vector $\xsym{\mathscr{g}} \coloneq [0,1]^{\top}$ refers to gravity, $(\xvec{u}_0, \theta_0)$ are the initial states, and the external forces $(\xsym{\Phi}, \psi)$ are fixed. The evolution equations in \eqref{equation:Boussinesq} consist of a Navier--Stokes system for the fluid, where temperature enters as a force in the direction of gravity, and of a convection diffusion equation, with the fluid velocity as drift field, for the temperature; as discussed in \Cref{section:proofmainresult}, the problem \eqref{equation:Boussinesq} will be globally wellposed for the considered classes of data. The boundary condition for~$\xvec{u}$, which allows the fluid to slip at the flat boundary~$\Gamma$, is sometimes called the Lions free boundary condition (\cf~\cite[Page 129]{Lions1996}, \cite[Remark 2.4]{Temam1997}, and \cite[Section 1.5.3]{CoronMarbachSueur2020}). It can also be seen as a limiting case of the Navier slip-with-friction boundary condition dating back to \cite{Navier1823}. As made precise below in \Cref{theorem:main}, the approximate controllability of \eqref{equation:Boussinesq} in the present context essentially means that, for any $\varepsilon > 0$, initial configuration $(\xvec{u}_0, \theta_0)$, target $(\xvec{u}_T, \theta_T)$, and suitable forces $(\xsym{\Phi}, \psi)$, one can choose the control~$\eta$ in \eqref{equation:Boussinesq} such that $\|(\xvec{u}, \theta)(\cdot, T)-(\xvec{u}_T, \theta_T)\|_{\xX} < \varepsilon$, where~$\xX$ will be a product space of Sobolev type.

	When controllability properties are known for a system, it is natural to ask whether, or to which extend, such properties remain true if one limits the actions of the controls to a reduced number of components. For the $3$D Navier--Stokes equations with the no-slip boundary condition, the local null controllability with controls having two vanishing components has been shown in \cite{CoronLissy2014}; other results in this direction, including $2$D domains, are cited therein. Further, see \cite{GuerreroMontoya2018} for local exact controllability results under the assumption of (nonlinear) Navier slip boundary conditions and with controls having one vanishing component. However, there are currently no such global controllability results for the Navier--Stokes system with distributed or boundary controls, also not if slip or periodic boundary conditions are employed. And even when allowing physically localized controls to act directly in all components, the global approximate controllability of the $2$D and $3$D Navier--Stokes equations with the no-slip boundary condition is an open problem posed by J.-L. Lions (see~\cite{CoronMarbachSueurZhang2019,CoronMarbachSueur2020} and the references therein). If one admits everywhere-supported controls, but localized in frequency, the global approximate controllability (and also the Lagrangian controllability) with controls vanishing in two components has been obtained in \cite{Nersesyan2015} for the Navier--Stokes system in the $3$D flat torus. Concerning the Boussinesq system driven only by a physically localized temperature control, the local null controllability, and also the local exact controllability to trajectories, were demonstrated first in \cite{Fernandez-CaraGuerreroImanuvilovPuel2006}. An argument that requires less assumptions on the domain, but further limits the choice of reachable trajectories, is given by \cite{Carreno2012}, and the case of (nonlinear) Navier slip-with-friction boundary conditions for the velocity, and the Neumann boundary condition for the temperature, is considered in \cite{Montoya2020}. Local exact controllability results of this type could be combined with our main theorem in order to conclude the global exact controllability to the zero state, or to suitable trajectories, by using only a physically localized temperature control (\cf~\Cref{remark:r}).
	Finally, let us mention that, when the controls are allowed to act both in the velocity and temperature equations, the local exact controllability to zero, or to trajectories, has been studied, \eg, in \cite{FursikovImanuvilov1998, Guerrero2006b}, and global controllability results are obtained in \cite{Chaves-SilvaEtal2023,FernandezCaraSantosSouza2016} using the return method. 
	
	\subsection{Main result}
	
	Let~$\overline{\xH}$ consist of all $f \in \xLtwo(\mathscr{C};\mathbb{R})$ with $\int_{\mathscr{C}} f(\xvec{x}) \, \xdx{\xvec{x}} = 0$, and write $\xwcurl{\xvec{f}} = \partial_1 f_2 - \partial_2 f_1$ for the \enquote{curl} of $\xvec{f} = [f_1, f_2]^{\top}$. The notation $\|\cdot\|$ refers either to~$\|\cdot\|_{\xLtwo(\mathscr{C};\mathbb{R})}$ or to~$\|\cdot\|_{\xLtwo(\mathscr{C};\mathbb{R}^2)}$. Moreover, we define the spaces
	\begin{gather*}
		\bH \coloneq \left\{ \xvec{f} \in \xLtwo(\mathscr{C};\mathbb{R}^2) \, \, \Big| \, \, \xdiv{\xvec{f}} = 0 \mbox{ in } \mathscr{C}, \xvec{f} \cdot \xvec{n} = 0 \mbox{ at } \Gamma, \xvec{f} \cdot \xsym{\mathscr{g}} \in \overline{\xH} \right\},\\
		\xHn{m}_{0} \coloneq \xHn{m}(\mathscr{C};\mathbb{R}) \cap \xHn{1}_0(\mathscr{C};\mathbb{R}), \quad m \geq 1,\\
		\xHn{m}_{\operatorname{N}} \coloneq \left\{ f \in \xHn{m}(\mathscr{C};\mathbb{R}) \cap \overline{\xH} \, \, \Big| \, \, \partial_{\xvec{n}} f = 0 \mbox{ at } \Gamma \right\}, \quad m \geq 2,
	\end{gather*}
	endowed respectively with $\| \cdot \|$ and $\|\cdot\|_m \coloneq (\sum_{|\xsym{\alpha}| \leq m} \| \partial^{\xsym{\alpha}} \cdot\|^2)^{1/2}$. Let us also recall the elliptic regularity estimate for div-curl problems (\cf~\cite[Apendix I]{Temam2001})
	\begin{equation}\label{equation:dce}
		\|\xvec{f}\|_k \lesssim \|\xdiv{\xvec{f}}\|_{k-1} + \|\xwcurl{\xvec{f}}\|_{k - 1} + \| \xvec{f} \cdot \xvec{n}\|_{\xHn{{k-1/2}}(\Gamma; \mathbb{R})} + | \int_{\mathscr{C}} \xvec{f}(\xvec{x}) \cdot \xsym{\mathscr{g}} \, \xdx{\xvec{x}}|,
	\end{equation}
	where $a \lesssim b$ means $a \leq C b$ and $C > 0$ shall always refer to an absolute constant. 
	The last term on the right-hand side in \eqref{equation:dce} appears since $\mathscr{C}$ is doubly-connected and its de Rham first cohomology space is spanned by $\xsym{\mathscr{g}} = [0,1]^{\top}$, which obeys
	\begin{equation*}\label{equation:dR}
		\xwcurl{\xsym{\mathscr{g}}} = 0, \quad \xdiv{\xsym{\mathscr{g}}} = 0, \quad \xsym{\mathscr{g}} |_{\Gamma} \cdot \xvec{n} = 0.
	\end{equation*}
	Throughout, we assume that the Lebesgue measure is normalized so that $\smallint_{\mathscr{C}} \, \xdx{\xvec{x}} = 1$.
	
	\begin{thrm}[Main result]\label{theorem:main}
		The system \eqref{equation:Boussinesq} is globally approximately controllable by using only a temperature control. That is, for arbitrary
		\begin{itemize}
			\item viscosity $\nu > 0$, diffusivity $\tau > 0$, control time $T > 0$, accuracy $\varepsilon > 0$,
			\item states $(\xvec{u}_0, \theta_0) \in \bH\times\overline{\xH}$ and $(\xvec{u}_T, \theta_T) \in \bH\times\xHn{2}_{\operatorname{N}}$ with $\xwcurl{\xvec{u}_T} \in \xHn{1}_{0}$,
			\item forces $(\xsym{\Phi}, \psi) \in \xHone((0, T); \bH\times\overline{\xH}) \cap \xLtwo((0, T); \xHone(\mathscr{C};\mathbb{R}^{2+1}))$,
		\end{itemize}
		there exists a control $\eta \in \xCinfty(\mathscr{C}\times[0,T];\mathbb{R})$ with $\operatorname{supp}(\eta) \subset \omegaup\times(0,T)$ such that the unique
		solution $(\xvec{u}, \theta)$ to \eqref{equation:Boussinesq} satisfies
		\[
			\|\xvec{u}(\cdot, T) - \xvec{u}_T\|_{2} + \|\theta(\cdot, T) - \theta_T\|_{2}  < \varepsilon.
		\]
	\end{thrm}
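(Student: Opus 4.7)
The plan is to follow the two-step strategy sketched in the introduction, combining a small-time scaling argument with Coron's return method across a partition $[0,T] = [0,T_1]\cup[T_1,T_2]\cup[T_2,T]$ with $\delta \coloneq T - T_2$ small. The short opening window $[0,T_1]$ uses no control and relies on parabolic smoothing to promote $(\xvec{u}_0,\theta_0) \in \bH\times\overline{\xH}$ to a classical regularity class by time $T_1$. The middle window $[T_1,T_2]$ hosts the return-method step, which steers the temperature to a prescribed (possibly large) launch profile while leaving the velocity essentially unchanged. The final short window $[T_2,T]$ carries the scaling step, which uses that launch profile to generate the target vorticity~$\xwcurl{\xvec{u}_T}$.

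\textbf{Small-time scaling on $[T_2,T]$.} On this interval I would rescale time by $s = (t-T_2)/\delta$ and introduce amplified unknowns $(\xvec{u}_\delta,\theta_\delta) \coloneq (\delta\xvec{u},\delta^2\theta)$. In these variables the viscous, inertial and external-force contributions in \eqref{equation:Boussinesq} acquire positive powers of $\delta$, whereas the buoyancy-pressure balance persists at leading order, yielding
\[
    \partial_s \xvec{u}_\delta + \xnab p_\delta = \theta_\delta\,\xsym{\mathscr{g}}, \qquad \xdiv{\xvec{u}_\delta} = 0.
\]
Taking the curl gives $\partial_s(\xwcurl{\xvec{u}_\delta}) = \partial_1 \theta_\delta$, so that a suitably prepared large temperature at $t = T_2$ drives, upon integration over $s\in[0,1]$, the final vorticity close to $\xwcurl{\xvec{u}_T}$; the compatibility $\xwcurl{\xvec{u}_T} \in \xHn{1}_{0}$ matches the Neumann condition $\partial_{\xvec{n}}\theta|_\Gamma = 0$, which forces $\partial_1 \theta = 0$ at $\Gamma$ and ensures the generated vorticity is compatible with the Lions slip condition. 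Closeness of vorticities then upgrades via \eqref{equation:dce} to $\xHn{2}$-closeness of the velocities. Meanwhile the control $\eta$ supported in $\omegaup$ is used during the same window to subtract the large launch profile from $\theta$, so that $\theta(\cdot,T)$ lands near $\theta_T$.

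\textbf{Return method on $[T_1,T_2]$.} Since $\eta$ is supported only in $\omegaup$, the launch profile required at $t = T_2$ must be assembled globally by transport. Following Coron's return method, I would construct a reference velocity $\bar{\xvec{u}}$ (together with a reference temperature and pressure chosen so that the momentum equation is satisfied) such that $\bar{\xvec{u}}(T_1) = \bar{\xvec{u}}(T_2)$ coincides with the velocity at the endpoints of the interval, and such that the characteristic flow of $\bar{\xvec{u}}$ sweeps every point of $\mathscr{C}$ through $\omegaup$ before time $T_2$. The channel geometry is essential here: because $\omegaup$ spans the full width in $x_1$ and the periodic direction is $x_2$, a reference flow essentially of the form $\bar{\xvec{u}}(\xvec{x},t) = (0,V(t))$ with $\int_{T_1}^{T_2} V\,\xdx{t} = 2\pi$ wraps the periodic direction once, so that all characteristics cross $\omegaup$. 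The linearized temperature equation along $\bar{\xvec{u}}$, driven by a control on $\omegaup$, is then approximately controllable through a standard Carleman-based observability estimate adapted to the Neumann boundary condition, and a fixed-point argument transfers the result to the full nonlinear temperature equation with only a small additional velocity perturbation.

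\textbf{Main obstacle.} The hard part is quantifying the bidirectional coupling. The temperature control feeds into the vorticity through buoyancy, so the cancellation of the launch profile during the scaling window must be engineered so that what remains at $t = T$ is close to $\theta_T$ in $\xHn{2}$; symmetrically, the return-method phase must leave the velocity essentially unchanged at $T_2$ despite the coupling to $\bar\theta$. Showing that both cross-effects can be made simultaneously smaller than $\varepsilon$ is where the technical core of the argument lives. Secondary difficulties include respecting the mean-zero constraint on $\theta$ (and hence on $\int_{\omegaup} \eta + \int_{\mathscr{C}} \psi$), designing $\bar\theta$ so that $\partial_{\xvec{n}}\bar\theta|_\Gamma = 0$ holds compatibly with the slip/curl-vanishing condition on $\bar{\xvec{u}}$, bounding the Carleman-observability constants uniformly, and bridging the low-regularity initial data by the parabolic smoothing phase $[0,T_1]$.
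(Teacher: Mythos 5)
Your overall architecture (parabolic smoothing, a return-method transport phase to install a large ``launch'' temperature, and a short scaling phase in which the large temperature drives the vorticity to its target) is the right one, and your observations about the channel geometry and the compatibility of $\partial_{\xvec{n}}\theta|_\Gamma=0$ with $\xwcurl{\xvec{u}_T}\in\xHn{1}_0$ are on point. However, there is a genuine gap in your final window $[T_2,T]$: you claim that the localized control $\eta$ can be used \emph{during the same scaling window} to ``subtract the large launch profile from $\theta$.'' The launch profile is globally supported in $\mathscr{C}$, while $\eta$ acts only in the strip $\omegaup$; removing it requires transporting all of $\mathscr{C}$ through $\omegaup$, i.e.\ a large return-method velocity, which is incompatible with the buoyancy-driven dynamics you need in that same window to generate the vorticity increment (and which, conversely, must leave the vorticity essentially unchanged). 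The paper therefore uses \emph{four} phases, not three: after the scaling phase sets the vorticity (with no control at all during that phase), the temperature is still of size $\gamma^{-1}$, and a \emph{second} return-method application is needed to steer it to $\theta_T$ while perturbing the vorticity by less than $\varepsilon$ --- that simultaneous ``steer $\theta$, freeze $w$'' property is precisely the content of the paper's Theorem \ref{theorem:HDS} and is the piece your plan is missing.

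Two further points. First, your scaling bookkeeping is off: in the relevant regime the velocity stays $O(1)$ while only the temperature is amplified to $O(\delta^{-1})$, so the advective coupling $(\xvec{u}\cdot\xnab)\theta$ is $O(\delta^{-1})$ and contributes at leading order after integration over $[0,\delta]$; it does \emph{not} acquire a positive power of $\delta$. The paper absorbs this into the ansatz via the corrector $(\xvec{U}_{\delta,\xi}\cdot\xnab)\xi$ in \eqref{equation:qrd}, and this is also the source of the extra boundary requirement $\partial_{111}\xi|_\Gamma=0$; a proof that drops this term will not close. Second, the linearization around the return-method trajectory is a pure transport equation (diffusion is lower order under the time scaling), so approximate controllability is obtained by an explicit construction along characteristics using a partition of unity subordinate to translates of the control strip (the paper's Theorems \ref{theorem:convection} and \ref{theorem:ControlLin}), not by Carleman/observability estimates; the latter are adapted to the parabolic problem and would not deliver the quantitative $\xHn{m}$ bounds on the control, uniform in the scaling parameter, that the remainder estimates require.
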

	
	\begin{rmrk}\label{remark:r0}
		Following the lines of \cite{Temam2001}, if \smash{$(\xvec{u}_0, \theta_0) \in \bH\times\xHn{2}_{\operatorname{N}}$} with \smash{$\xwcurl{\xvec{u}_0} \in \xHn{1}_{0}$} in \Cref{theorem:main}, then $(\xvec{u}, \theta)\in\smash{\xLinfty((0,T);\xHtwo(\mathscr{C};\mathbb{R}^{2+1}))\cap\xLtwo((0,T);\xHn{3}(\mathscr{C};\mathbb{R}^{2+1}))}$. Further, one can then show that $(\xvec{u}, \theta) \in \smash{\xCzero([0,T];\xHtwo(\mathscr{C};\mathbb{R}^{2+1}))}$ by using the abstract argument from~\cite[Theorem 3.1]{Temam1982}.
	\end{rmrk}
	
	\begin{rmrk}\label{remark:r}
		Due to the parabolic smoothing effects for Leray-Hopf type solutions, as detailed in \cite[Lemma 2.1]{Chaves-SilvaEtal2023} (see also \cite[Proof of Lemma 9]{CoronMarbachSueur2020} or \cite[Appendix A]{LiaoSueurZhang2022}), one can use \Cref{theorem:main} without external forces to obtain global exact controllability results with the help of known local exact ones, \eg, from \cite{Montoya2020}; we refer to \cite[Section 2.6 and Section 5]{CoronMarbachSueur2020} for the details of such an argument. By density, \Cref{theorem:main} likewise provides approximate controllability in $\xLtwo\times\xLtwo$ or $\xHone\times\xHone$; controllability in higher norms could also be achieved for suitable forces. 
	\end{rmrk}

	\begin{rmrk}\label{remark:otherbc}
		Regarding other boundary conditions, \eg, modeling friction at the walls, difficulties arise in the context of \Cref{theorem:HDS}, where viscous boundary layers would enter the proof. For the Navier--Stokes system with the Navier slip-with-friction condition, it was shown in \cite{CoronMarbachSueur2020} how these boundary layers can be dissipated sufficiently; this analysis has been extended in \cite{Chaves-SilvaEtal2023} to the Boussinesq system in the presence of both velocity and temperature controls. For the no-slip boundary condition, and even with controls in both the velocity and the temperature equations, the question of global approximate controllability parallels the open problem due to J.-L. Lions mentioned in the introduction. The shape of the domain is also crucially used here, as it facilitates a suitable return method trajectory for \eqref{equation:Boussinesq} controlled through the temperature (\cf~\Cref{remark:return}).
	\end{rmrk}

	\section{Proof of the main result}\label{section:proofmainresult}
	The proof of \Cref{theorem:main} involves two key ingredients. 1) The approximate controllability of the vorticity in a short time via a large control acting through the initial condition for the temperature; see~\Cref{theorem:LCST} in \Cref{subsection:vort}.~2) A version of the return method, implemented in \Cref{subsection:temp} and culminating in \Cref{theorem:HDS}. All pieces of the argument are finally combined in \Cref{subsection:conclusion}.
	
	\subsection{Remarks on the Boussinesq system}\label{subsection:remarksbp}
	
	As \eqref{equation:Boussinesq} accounts for viscosity and diffusion, its global wellposedness and regularity theories parallel those for the $2$D Navier--Stokes system; see, \eg, \cite{Temam2001,FoiasManleyTemam1987,Temam1997}, and also \cite{Chaves-SilvaEtal2023} for further references and a description of (controlled) Leray-Hopf weak solutions.

	Let us assume \smash{$(\xvec{u}_0, \theta_0) \in \bH\times\xHn{2}_{\operatorname{N}}$} with \smash{$\xwcurl{\xvec{u}_0} \in \xHn{1}_{0}$} . The vorticity formulation of~\eqref{equation:Boussinesq} is obtained by acting with \enquote{$\xwcurl{}$} on the velocity equation; \ie,
	\begin{equation}\label{equation:vorticity}
		\begin{gathered}
			\partial_t w - \nu \Delta w + (\xvec{u}\cdot\xnab)w = \partial_1 \theta + \varphi, \quad w(\cdot, 0) = w_0,\\
			\partial_t \theta - \tau \Delta \theta + (\xvec{u}\cdot\xnab)\theta = \mathbb{I}_{\omegaup} \eta + \psi, \quad \theta(\cdot, 0) = \theta_0, \\
			\xwcurl{\xvec{u}} = w, \quad \xdiv{\xvec{u}} = 0, \quad \int_{\mathscr{C}} \xvec{u}(\xvec{x},t) \cdot \xsym{\mathscr{g}} \, \xdx{\xvec{x}} = \int_0^t \int_{\mathscr{C}} \theta(\xvec{x},s) \, \xdx{\xvec{x}} \xdx{s},\\
			\xvec{u}|_{\Gamma} \cdot \xvec{n} = 0, \quad (\xwcurl{\xvec{u}})|_{\Gamma} = 0, \quad \partial_{\xvec{n}}\theta |_{\Gamma} = 0,
		\end{gathered}
	\end{equation}
	where $w_0 = \xwcurl{\xvec{u}_0}$, $\varphi = \xwcurl{\xsym{\Phi}}$, and $\psi$ are given as in \Cref{theorem:main}. If $\mathbb{I}_{\omegaup} \eta (\cdot, t)$ is average-free for $0 \leq t \leq T < +\infty$, then also $\theta(\cdot, t)$. Moreover, due to~\eqref{equation:dce}, the average of $\xvec{u}(\cdot, t) \cdot \xsym{\mathscr{g}}$ must be specified for all~$t$.
	Further, assuming $\mathbb{I}_{\omegaup} \eta$ to be sufficiently regular, we denote by
	\[
		S(w_0, \theta_0, \varphi, \mathbb{I}_{\omegaup} \eta + \psi) \coloneq (w, \theta) \in \xCzero([0,T]; \xHn{1}_{0}\times\xHn{{2}}_{\operatorname{N}}) \cap \xLtwo((0,T); \xHn{{2}}_{0}\times\xHn{{3}}_{\operatorname{N}})
	\]
	the respective solution to \eqref{equation:vorticity}, and by $S_t = (S^1_t, S^2_t)$ its restriction at time $t$; \ie,
	\begin{equation}\label{equation:restr}
		S^1_t(w_0, \theta_0, \varphi, \mathbb{I}_{\omegaup} \eta + \psi) \coloneq w(\cdot, t), \quad
		S^2_t(w_0, \theta_0, \varphi, \mathbb{I}_{\omegaup} \eta + \psi) \coloneq \theta(\cdot, t).
	\end{equation}

	\subsection{Steering the vorticity through the initial temperature}\label{subsection:vort}
	
	The next theorem allows to steer the vorticity in a short time by means of a large initial temperature. Our proof develops the ansatz which has been introduced in \cite{NersesyanRissel2024} for the torus case (see also \cite{BoulvardGaoNersesyan2023}). Due to boundary effects, we can only reach approximately the vorticity states of the form $w_0 - \partial_1\xi$, for a sufficiently regular profile~$\xi$ which satisfies $\partial_{1}\xi|_{\Gamma} = 0$ and the less intuitive condition $\partial_{111}\xi|_{\Gamma} = 0$.
	
	\begin{thrm}\label{theorem:LCST}
		Let~$T > 0$, $\xi \in \xCinfty(\mathscr{C}; \mathbb{R})\cap\xHn{2}_{\operatorname{N}}$ with $\partial_{111}\xi|_{\Gamma} = 0$, $(w_0, \theta_0) \in \xHn{{2}}_{0} \times \xHn{{2}}_{\operatorname{N}}$,~forces $\varphi = \xwcurl{\xsym{\Phi}}$ and $\psi$ given by \Cref{theorem:main}, and~$\varepsilon > 0$. There exists $\delta > 0$ with
		\begin{equation}\label{equation:initialdatacontrolresult}
			\|S^1_{\delta}(w_0 , \theta_0 - \delta^{-1} \xi, \varphi, \psi) - (w_0 - \partial_1\xi)\|_1 < \varepsilon.
		\end{equation}
	\end{thrm}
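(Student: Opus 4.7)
The plan is to make rigorous the following heuristic: on a very short time interval $[0,\delta]$, the viscous and convective terms in \eqref{equation:vorticity} are negligible compared to the buoyancy forcing $\partial_1\theta$, and the large initial temperature $-\delta^{-1}\xi$ produces a net vorticity increment $\int_0^\delta \partial_1\theta \, \xdx{t} \approx \delta \cdot \partial_1(-\delta^{-1}\xi) = -\partial_1\xi$. To quantify this, I would introduce the scaling $s = t/\delta$, $\bar w(s) = w(\cdot,\delta s)$, $\bar\theta(s) = \delta\,\theta(\cdot,\delta s)$, $\bar{\xvec{u}}(s) = \xvec{u}(\cdot,\delta s)$. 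Under this change of variables \eqref{equation:vorticity} (with $\eta\equiv 0$) becomes a regularly perturbed system on $s\in[0,1]$ of the form $\partial_s\bar w - \nu\delta\Delta\bar w + \delta(\bar{\xvec{u}}\cdot\xnab)\bar w = \partial_1\bar\theta + \delta\varphi(\cdot,\delta s)$ and $\partial_s\bar\theta - \tau\delta\Delta\bar\theta + \delta(\bar{\xvec{u}}\cdot\xnab)\bar\theta = \delta^2\psi(\cdot,\delta s)$, with initial vorticity $w_0$ and $O(1)$ initial temperature $\delta\theta_0 - \xi$. Its formal $\delta\to 0$ limit $\partial_s w^* = \partial_1\theta^*$, $\partial_s\theta^* = 0$ is solved in closed form by $\theta^*\equiv -\xi$ and $w^*(s) = w_0 - s\partial_1\xi$, reaching the desired target $w_0-\partial_1\xi$ at $s=1$.

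I would then construct a refined ansatz $(w^{\flat},\theta^{\flat}) = (w^* + \delta w^{(1)},\,\theta^* + \delta\theta^{(1)})$ whose correctors are chosen so that $(w^{\flat},\theta^{\flat})$ matches the actual initial data, leaves an $O(\delta^2)$ residual in the rescaled equations, and respects the boundary conditions $w^{\flat}|_\Gamma = 0$, $\partial_{\xvec{n}}\theta^{\flat}|_\Gamma = 0$ at all times. It is at this stage that the hypotheses on $\xi$ play their role. Since the channel boundary $\Gamma$ is axial, $\xi\in\xHn{2}_{\operatorname{N}}$ already gives $\partial_1\xi|_\Gamma = 0$, which together with $w_0\in\xHn{2}_0$ yields $w^*|_\Gamma = 0$. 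Moreover, because the tangential Laplacian $\partial_2^2(\partial_1\xi)|_\Gamma = \partial_2^2(0) = 0$ automatically, one has $\Delta(\partial_1\xi)|_\Gamma = \partial_{111}\xi|_\Gamma$, so the extra assumption $\partial_{111}\xi|_\Gamma = 0$ is exactly the second-order compatibility needed to place the corrector $w^{(1)}$ in $\xHn{2}_0$ and to set up the parabolic estimate in $\xHn{1}$ for the remainder.

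The remainder $(\widetilde w,\widetilde\theta) = (\bar w - w^{\flat},\bar\theta - \theta^{\flat})$ solves a linear parabolic system on $[0,1]$ with zero initial data, homogeneous boundary conditions, and $\xHn{1}$-sources of size $O(\delta)$. I would close the argument with a Gronwall-type energy estimate in $\xHn{1}\times\xHn{1}$ by testing the two equations against $-\Delta\widetilde w$ and $-\Delta\widetilde\theta$: the convective term $\delta(\bar{\xvec{u}}\cdot\xnab)(\cdot)$ can be absorbed into the dissipation $\nu\delta\|\Delta\widetilde w\|^2 + \tau\delta\|\Delta\widetilde\theta\|^2$ after using $\xdiv{\bar{\xvec{u}}} = 0$ and $\bar{\xvec{u}}\cdot\xvec{n}|_\Gamma = 0$, while the buoyancy coupling $\partial_1\widetilde\theta$ is handled by pairing the two estimates. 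This yields $\|\widetilde w(1)\|_1 + \|\widetilde\theta(1)\|_1 \le C\sqrt{\delta}$, after which unwinding the rescaling and choosing $\delta$ sufficiently small gives \eqref{equation:initialdatacontrolresult}. The main obstacle I anticipate is securing uniform-in-$\delta$ bounds on $\bar{\xvec{u}}$ (hence on $\bar w$) in $\xLinfty_s\xHtwo$ so that the convective contribution remains subcritical in the $\xHn{1}$ estimate; this rests on the global $\xHtwo$-wellposedness for \eqref{equation:vorticity} recalled in \Cref{subsection:remarksbp} together with the $\delta$-weighting of the nonlinear and viscous terms after rescaling, which prevents the large initial datum $\|\bar\theta(0)\| = O(1)$ (originating from the singular $\delta^{-1}\xi$) from producing unbounded velocity growth over the unit interval $s\in[0,1]$.
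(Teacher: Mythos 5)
Your strategy is, up to the cosmetic time rescaling $s=t/\delta$, the one the paper implements: the leading-order profile is $\theta^*\equiv-\xi$, $w^*(s)=w_0-s\,\partial_1\xi$, first-order correctors absorb the diffusion and convection of the large temperature, and one closes with an $\xHn{1}$ energy estimate on the remainder. In unrescaled variables your corrector $\delta\theta^{(1)}$ is exactly the pair of terms $-\delta^{-1}t\tau\Delta\xi$ and $(\xvec{U}_{\delta,\xi}\cdot\xnab)\xi$ in \eqref{equation:qrd}, with $\xvec{U}_{\delta,\xi}$ the Biot--Savart field of $\smallint_0^s w^*$. Your reading of the boundary hypotheses is essentially right, except that $\partial_{111}\xi|_{\Gamma}=0$ is used for the \emph{temperature} corrector (it gives $\partial_{\xvec{n}}\Delta\xi|_{\Gamma}=\partial_{111}\xi|_{\Gamma}=0$, so the Neumann condition survives), not for a vorticity corrector.

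The genuine gap is at the closure of the energy estimate. You treat the remainder system as linear with the drift $\bar{\xvec{u}}$ given, and propose to get a uniform-in-$\delta$ bound on $\bar{\xvec{u}}$ in $\xLinfty((0,1);\xHtwo)$ from ``global $\xHtwo$-wellposedness.'' That cannot be invoked as a black box: the initial temperature has size $\delta^{-1}$ in every Sobolev norm, so all bounds supplied by the general theory degenerate as $\delta\to0$; the uniform bound on $\bar{\xvec{u}}$ over $s\in[0,1]$ is essentially equivalent to the statement being proved and must be obtained simultaneously with the remainder estimate. Relatedly, the drift itself contains the unknown (the velocity is generated by $w^{\flat}+\widetilde w$), so the remainder system is quadratic, not linear. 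The paper handles both issues at once by splitting the convection into pieces driven by the explicit profile and by the remainder's own velocity $\xvec{Q}_{\delta}$, arriving at a differential inequality with quartic terms $\|q_{\delta}\|_1^4+\|r_{\delta}\|_1^4$ that is closed by the comparison $f'/f^2\le C$ (smallness comes from $c_{\delta}\to0$, not from an a priori bound on $\bar{\xvec{u}}$); you need to add this nonlinear Gr\"onwall step or an equivalent bootstrap. Two smaller defects: the rate $O(\sqrt\delta)$ is unavailable since $\varphi,\psi$ are only $\xLtwo$ in time (the paper obtains only $c_{\delta}\to0$); and your vorticity corrector $w^{(1)}$ would have to absorb $\nu\Delta w^*\ni\nu\Delta w_0$, which does not vanish on $\Gamma$, so $\widetilde w$ would leave $\xHn{1}_0$ and the integrations by parts would fail --- the paper accordingly corrects only the temperature and leaves the $O(1)$ viscous and convective errors in the vorticity equation as sources, which integrate to $o(1)$ over $[0,\delta]$.
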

	\begin{proof} 
		Thanks to the assumptions on $\xi$, it follows for any $\delta \in (0,1)$ that the pair
		\begin{equation}\label{equation:defwtd}
			(w_{\delta}, \theta_{\delta}) \coloneq S_{\delta}(w_0, \theta_0 - \delta^{-1} \xi, \varphi, \psi) + (0, \delta^{-1} \xi)
		\end{equation}
		is well-defined and solves
		\begin{equation}\label{equation:AMControl}
			\begin{gathered}
				\partial_t w_{\delta} - \nu \Delta w_{\delta} + \left(\xvec{u}_{\delta} \cdot \xnab\right) w_{\delta} = \partial_1 (\theta_{\delta} - \delta^{-1} \xi) + \varphi, \quad w_{\delta}(\cdot, 0) = w_0 \\
				\partial_t \theta_{\delta} - \tau \Delta (\theta_{\delta} - \delta^{-1} \xi) + (\xvec{u}_{\delta} \cdot \xnab) (\theta_{\delta} - \delta^{-1} \xi) = \psi, \quad \theta_{\delta}(\cdot, 0) = \theta_0,\\
				\xwcurl{\xvec{u}_{\delta}} = w_{\delta}, \quad \xdiv{\xvec{u}_{\delta}} = 0, \quad \int_{\mathscr{C}} \xvec{u}_{\delta}(\xvec{x}, t) \cdot \xsym{\mathscr{g}} \, \xdx{\xvec{x}} = 0,  \\
				\xvec{u}_{\delta}|_{\Gamma} \cdot \xvec{n} = 0, \quad w_{\delta}|_{\Gamma} = 0, \quad \partial_{\xvec{n}}\theta_{\delta} |_{\Gamma} = 0.
			\end{gathered}
		\end{equation}
		In particular, as mentioned in \Cref{subsection:remarksbp}, the temperature~$ \theta_{\delta}(\cdot, t)$ is average-free for all $t \in [0, \delta]$; thus, by \eqref{equation:defwtd}, in \eqref{equation:AMControl} one actually has
		\[
			\int_{\mathscr{C}} \xvec{u}_{\delta}(\xvec{x}, t) \cdot \xsym{\mathscr{g}} \, \xdx{\xvec{x}} = 	0 = \int_0^t \int_{\mathscr{C}} \theta_{\delta}(\xvec{x},s) \, \xdx{\xvec{x}} \xdx{s}, \quad t \in [0, \delta].
		\]
		\paragraph{Step 1. Ansatz.} To establish $\lim_{\delta \to 0}  w_{\delta}(\cdot, \delta) = (w_0 - \partial_1\xi)$ in $\xHn{{1}}_{0}$, the following ansatz is made such that $\delta^{-1} \partial_1 \xi$, $\tau \delta^{-1} \Delta  \xi$, and $\delta^{-1} (\xvec{u}_{\delta} \cdot \xnab) \xi$ are canceled in the remainder estimates. More precisely, for each $\delta \in (0, 1)$ we define the remainders
		\begin{equation}\label{equation:qrd}
			\begin{gathered}
				q_{\delta}(\xvec{x}, t) \coloneq w_{\delta}(\xvec{x}, t) - w_0(\xvec{x}) + \delta^{-1} t \partial_1 \xi (\xvec{x}),\\
				r_{\delta}(\xvec{x}, t) \coloneq \theta_{\delta}(\xvec{x}, t) - \theta_0(\xvec{x}) 
				+ \delta^{-1}t \tau \Delta\xi(\xvec{x}) -  \left(\xvec{U}_{\delta,\xi}(\xvec{x} ,t) \cdot \xnab \right) \xi(\xvec{x}),
			\end{gathered}
		\end{equation}
		where $\xvec{U}_{\delta,\xi}$ is uniquely determined as the solution to
		\begin{equation}\label{equation:Udeltaxi}
			\begin{gathered}
				\xwcurl{\xvec{U}_{\delta,\xi}}(\cdot, t) = \delta^{-1}t \left(w_0 - \frac{\delta^{-1} t \partial_1 \xi}{2}\right), \quad \xdiv{\xvec{U}_{\delta,\xi}} = 0, \\ 
				\xvec{U}_{\delta,\xi}|_{\Gamma} \cdot \xvec{n} = 0, \quad \int_{\mathscr{C}} \xvec{U}_{\delta,\xi}(\xvec{x}, \cdot) \cdot \xsym{\mathscr{g}} \, \xdx{\xvec{x}} = 0.
			\end{gathered}
		\end{equation}
		In view of \eqref{equation:qrd}, the convergence \eqref{equation:initialdatacontrolresult} will follow after showing that
		\begin{equation}\label{equation:convqd}
			\lim\limits_{\delta \to 0}  \|q_{\delta}(\cdot, \delta)\|_{1} = 0.
		\end{equation}
		To this end, we consider the problems satisfied by $q_{\delta}$ and $r_{\delta}$, which are derived by taking $\partial_t$ in \eqref{equation:qrd} and inserting \eqref{equation:AMControl}; namely,
		\begin{multline}\label{equation:stlce1}
			\partial_t q_{\delta} - \nu \Delta q_{\delta} + (\xvec{Q}_{\delta} \cdot \xnab) q_{\delta} = \varphi + \partial_1 (\theta_{\delta} - r_{\delta}) + \partial_1 r_{\delta} - \nu \Delta (q_{\delta} - w_{\delta}) \\ 
			+ ((\xvec{u}_{\delta} - \xvec{Q}_{\delta}) \cdot \xnab) (q_{\delta} - w_{\delta}) + (\xvec{Q}_{\delta} \cdot \xnab) (q_{\delta} - w_{\delta}) + ((\xvec{Q}_{\delta} - \xvec{u}_{\delta}) \cdot \xnab) q_{\delta}
		\end{multline}
		and
		\begin{multline}\label{equation:stlce12}
			\partial_t r_{\delta} - \tau\Delta r_{\delta} + (\xvec{Q}_{\delta} \cdot \xnab) r_{\delta} = \psi - \tau\Delta (r_{\delta} - \theta_{\delta}) + (\xvec{Q}_{\delta} \cdot \xnab) (r_{\delta} - \theta_{\delta})  \\
			+ ((\xvec{Q}_{\delta} - \xvec{u}_{\delta}) \cdot \xnab) r_{\delta} + ((\xvec{Q}_{\delta} - \xvec{u}_{\delta}) \cdot \xnab) (\theta_{\delta} - r_{\delta})
			+ \delta^{-1} (\xvec{Q}_{\delta} \cdot \xnab) \xi,
		\end{multline}
		where $\xvec{Q}_{\delta}$ is the solution to
		\begin{equation*}\label{equation:bdrv0}
			\xwcurl{\xvec{Q}_{\delta}} = q_{\delta}, \quad \xdiv{\xvec{Q}_{\delta}} = 0, \quad \xvec{Q}_{\delta}|_{\Gamma} \cdot \xvec{n} = 0, \quad
			\int_{\mathscr{C}} \xvec{Q}_{\delta}(\xvec{x},\cdot) \cdot \xsym{\mathscr{g}} \, \xdx{\xvec{x}} = 0.
		\end{equation*}
		By the hypotheses stated in \Cref{theorem:LCST}, in particular $\partial_{1}\xi|_{\Gamma} = \partial_{111}\xi|_{\Gamma} = 0$, and due to the definitions in \eqref{equation:qrd} and~\eqref{equation:Udeltaxi}, the initial and boundary values of the remainders are
		\begin{equation*}\label{equation:bdrv}
			q_{\delta}(\cdot, 0) = 0, \quad r_{\delta}(\cdot, 0) = 0, \quad q_{\delta}|_{\Gamma} = 0, \quad \partial_{\xvec{n}} r_{\delta}|_{\Gamma} = 0.
		\end{equation*}
		
		\paragraph{Step 2. Estimates.} We utilize \eqref{equation:dce}, and the Sobolev embeddings $\xHone(\mathscr{C}; \mathbb{R}) \subset \xLfour(\mathscr{C}; \mathbb{R})$ and $\xHtwo(\mathscr{C}; \mathbb{R}) \subset \xLinfty(\mathscr{C}; \mathbb{R})$. First, the equation for~$q_\delta$ in \eqref{equation:stlce1} is multiplied by~$q_{\delta}$, and in a second step by~$-\Delta q_{\delta}$. Then, we integrate over $\mathscr{C}\times(0,t)$ with $t \in [0, \delta]$. Because $\xvec{u}_{\delta}$ and $\xvec{Q}_{\delta}$ are divergence free, an also due to the known boundary values of~$q_{\delta}$,~$\xvec{Q}_{\delta}$, and~$r_{\delta}$, integration by parts yields
		\begin{equation*}
			\begin{multlined}
				\|q_{\delta}(\cdot, t)\|^2 + \int_0^t \| q_{\delta}(\cdot, s) \|^2_1 \, \xdx{s} \lesssim \int_0^{t} \| \varphi(\cdot, s) \|^2 \, \xdx{s}  + \int_0^{t} \| (\theta_{\delta} - r_{\delta})(\cdot, s) \|_1^2 \, \xdx{s} \\
				\begin{aligned}
					& + \int_0^{t} \| (q_{\delta} - w_{\delta})(\cdot, s) \|_2^2 \, \xdx{s} + \int_0^{t} \| (q_{\delta} - w_{\delta})(\cdot, s) \|_1^4 \, \xdx{s} + \int_0^t \| q_{\delta} (\cdot, s) \|^4 \, \xdx{s}\\
					& + \int_0^t \left(\| q_{\delta} (\cdot, s) \|^2 + \| r_{\delta}(\cdot, s) \|_1^2\right) \, \xdx{s}  \eqcolon I_{1,1} + \dots + I_{1,6}
				\end{aligned}
			\end{multlined}
		\end{equation*}
		and\allowdisplaybreaks
		\begin{multline*}
			\|q_{\delta}(\cdot, t)\|^2_1 + \int_0^t \| \Delta q_{\delta}(\cdot, s) \|^2 \, \xdx{s} \\
			\begin{aligned}
				& \lesssim \ell 
				\int_0^t \| q_{\delta}(\cdot, s) \|_2^2 \, \xdx{s} + \ell^{-1} \int_0^{t} \| \varphi(\cdot, s) \|^2 \, \xdx{s} + \ell^{-1} \int_0^{t} \| (\theta_{\delta} - r_{\delta})(\cdot, s) \|_1^2 \, \xdx{s} \\
				& \quad + \ell^{-1} \int_0^{t} \| (q_{\delta} - w_{\delta})(\cdot, s) \|_2^2 \, \xdx{s} + \int_0^{t} \| (q_{\delta} - w_{\delta})(\cdot, s) \|_2^4 \, \xdx{s} \\
				& \quad + \int_0^t \| q_{\delta}(\cdot, s) \|_1^2 \, \xdx{s} + \ell^{-1} \int_0^t \| r_{\delta}(\cdot, s) \|_1^2 \, \xdx{s} + \int_0^t \| q_{\delta}(\cdot, s) \|_1^4 \, \xdx{s}\\
				& \eqcolon I_{2,1} + \dots + I_{2,8}
			\end{aligned}
		\end{multline*}
		for any $\ell > 0$. Regarding \eqref{equation:stlce12}, similar considerations as above lead to
		\begin{multline*}
			\|r_{\delta}(\cdot, t)\|^2 + \int_0^t \| r_{\delta}(\cdot, s) \|^2_1 \, \xdx{s} \\
			\begin{aligned}
				& \lesssim \int_0^{t} \| \psi(\cdot, s) \|^2 \, \xdx{s} + \int_0^{t} \| (r_{\delta} - \theta_{\delta})(\cdot, s) \|_2^2 \, \xdx{s} + \int_0^{t} \| (r_{\delta} - \theta_{\delta})(\cdot, s) \|_1^4 \, \xdx{s} \\
				& \quad  + \int_0^{t} \| (q_{\delta} - w_{\delta})(\cdot, s) \|_1^4 \, \xdx{s} + \int_0^t \| r_{\delta}(\cdot, s) \|^4 \, \xdx{s} \\
				& \quad  + \delta^{-1} \|\xi\|_3 \int_0^t  \left( \|r_{\delta}(\cdot, s) \|^2  + \| q_{\delta}(\cdot, s) \|^2\right)  \, \xdx{s}\\
				& \eqcolon J_{1,1} + \dots + J_{1,6}
			\end{aligned}
		\end{multline*}
		and
		\begin{multline*}
			\|r_{\delta}(\cdot, t)\|^2_1 + \int_0^t \| r_{\delta}(\cdot, s) \|^2_2 \, \xdx{s} \\
			\begin{aligned}
				& \lesssim  \ell 
				\int_0^t \| r_{\delta}(\cdot, s) \|_2^2 \, \xdx{s} + \ell^{-1} \int_0^{t} \| \psi(\cdot, s) \|^2 \, \xdx{s} \\
				& \quad + \int_0^{t} \| (r_{\delta} - \theta_{\delta})(\cdot, s) \|_2^4 \, \xdx{s} + \ell^{-1} \int_0^{t} \| (r_{\delta} - \theta_{\delta})(\cdot, s) \|_2^2 \, \xdx{s} \\
				& \quad + \int_0^{t} \| (q_{\delta} - w_{\delta})(\cdot, s) \|_2^4 \, \xdx{s} + \int_0^t \| r_{\delta}(\cdot, s) \|_1^4 \, \xdx{s} \\
				& \quad + \int_0^t \| q_{\delta}(\cdot, s) \|^4 \, \xdx{s} + \delta^{-1} \|\xi\|_4 \int_0^t \left(\| r_{\delta}(\cdot, s) \|_1^2  + \| q_{\delta}(\cdot, s) \|^2 \right) \, \xdx{s} \\
				& \eqcolon J_{2,1} + \dots + J_{2,8}.
			\end{aligned}
		\end{multline*}
		All of the previous estimates are combined, while fixing $\ell > 0$ sufficiently small (independently of~$\delta$) such that the terms~$I_{2,1}$ and~$J_{2,1}$ are absorbed by the resulting left-hand side; \ie,
		\begin{equation}\label{equation:ee5}
			\begin{gathered}
				\|q_{\delta}(\cdot, t)\|^2_1 + \|r_{\delta}(\cdot, t)\|^2_1 + (1 - \ell) \int_0^t \left(\| q_{\delta}(\cdot, s) \|_2^2 + \| r_{\delta}(\cdot, s) \|_2^2\right)  \, \xdx{s} \\
				\lesssim \sum_{i=1}^6 (I_{1,i} + J_{1,i}) + \sum_{i=2}^{8} (I_{2,i} + J_{2,i}),
			\end{gathered}
		\end{equation}
		where we used that
		\begin{equation}\label{equation:LaplH2}
			\|\cdot\|^2_{\xHn{2}(\mathscr{C}; \mathbb{R})} \lesssim \|\cdot\|^2_{\xLtwo(\mathscr{C}; \mathbb{R})} + \|\Delta \cdot\|^2_{\xLtwo(\mathscr{C}; \mathbb{R})} .
		\end{equation}
		Moreover, for any $a \geq 1$ and $l \in \mathbb{N}$, it follows from \cref{equation:dce,equation:qrd,equation:Udeltaxi} that
		\begin{equation*}
			\begin{gathered}
				\|(q_{\delta} - w_{\delta})\|_{\xLn{{a}}((0, \delta); \xHn{{l}})}^{a} \lesssim \delta \left(1 + \|w_0\|_{l}^{a} + \| \xi \|_{l+1}^{a}\right),\\
				\|(r_{\delta} - \theta_{\delta})\|_{\xLn{{a}}((0, \delta); \xHn{{l}})}^{a} \lesssim \delta \left(1 + \|w_0\|_{l-1}^{2a} + \|\theta_0\|_{l}^a + \| \xi \|_{l+3}^{2a}\right),
			\end{gathered}
		\end{equation*}
		which implies
		\begin{equation}\label{equation:asympta}
			\|(q_{\delta} - w_{\delta})\|_{\xLn{{a}}((0, \delta); \xHn{{2}})}^{a} + \|(r_{\delta} - \theta_{\delta})\|_{\xLn{{a}}((0, \delta); \xHn{{3}})}^{a} \lesssim \delta.
		\end{equation}
		Because the forces $(\varphi, \psi)$ are fixed, it follows with the help of \eqref{equation:asympta} that
		\begin{equation*}
			\lim_{\delta \to 0} \left( \sum_{i=1}^4 (I_{1,i} + J_{1,i}) + \sum_{i=2}^5 (I_{2,i} + J_{2,i})\right) = 0.
		\end{equation*}
		Also, let us emphasize that
		\begin{equation*}\label{equation:alpha}
			\begin{gathered}
				J_{1,6} + J_{2, 8} \lesssim \int_0^{\delta} \alpha_{\delta} \left(\|q_{\delta}(\cdot, s)\|_1^2 + \|r_{\delta}(\cdot, s)\|_1^2\right)\, \xdx{s}, \quad
				\alpha_{\delta} \coloneq \delta^{-1}\max\{1, \|\xi\|_4^2\},
			\end{gathered}
		\end{equation*}
		where $\smash{\smallint_0^{\delta} \alpha_{\delta} \, \xdx{s} = \max\{1, \|\xi\|_4^2\}}$.
		Now, the remaining integrals $I_{1,i}$, $I_{2,k}$, $J_{1,i}$, and~$J_{2,k}$, for $i > 4, k > 5$, are good terms for applying Gr\"onwall's inequality in \eqref{equation:ee5}, which then yields the existence of $c_{\delta} > 0$ with $\lim_{\delta \to 0} c_{\delta} = 0$ and such that
		\begin{multline*}
			\|q_{\delta}(\cdot, t)\|^2_1 + \|r_{\delta}(\cdot, t)\|^2_1\\
			\leq \left(c_{\delta} + C\int_0^t \left(\| q_{\delta}(\cdot, s) \|_1^4 + \| r_{\delta}(\cdot, s) \|_1^4\right) \, \xdx{s} \right) \operatorname{exp}({C\max\{1, \|\xi\|_4^2\}})
		\end{multline*}
		for all $t \in [0, \delta]$. Subsequently, we rename $c_{\delta}\operatorname{exp}({C\max\{1, \|\xi\|_4^2\}})$ again as~$c_{\delta}$ and absorb $\operatorname{exp}({C\max\{1, \|\xi\|_4^2\}})$ in the absolute constant $C > 0$. Then, we define
		\[
			f(t) \coloneq c_{\delta}  + C \int_0^t \left( \| q_{\delta}(\cdot, s) \|_1^4 + \| r_{\delta}(\cdot, s) \|_1^4\right) \, \xdx{s}.
		\]
		Hence, one has $f'/f^2 \leq C$ and thus $f(\delta) \leq c_{\delta} (1-c_{\delta} \delta C)^{-1} \longrightarrow 0$ as $\delta \longrightarrow 0$, which implies \eqref{equation:convqd}.
	\end{proof}
	
	\subsection{Controlling the temperature} \label{subsection:temp}
	We define a constant-in-$\xvec{x}$ vector field whose integral curves all cross the control region~$\omegaup$. This construction is similar to \cite{NersesyanRissel2024,NersesyanRissel2022} and shall provide a return method type flow (\cf~\Cref{remark:return}) with special structure that facilitates the proof of \Cref{theorem:HDS}. To begin with, we fix $0 < H_1 < H_2 < 2\pi$ such that $[-1, 1] \times [H_1, H_2] \subset \omegaup$, and we further choose a possibly large~$K \in \mathbb{N}$ with $l_K \coloneq 8\pi/3K <  H_2 - H_1/3$.
	Then, the channel $\mathscr{C}$ is covered by the overlapping rectangles
	\[
		\mathcal{O}_i \coloneq (-2, 2) \times \left(\frac{3(i-1)l_{K}}{4}, \frac{3(i-1)l_{K}}{4} + l_{K}\right), \quad i \in \{1,\dots,K\},
	\] 
	which are vertical translations of the reference rectangle
	\[	
		\mathcal{O} \coloneq (-2, 2) \times (H_1+l_K, H_1+2l_K) \subset \omegaup.
	\]
	Moreover, a cutoff function $\chi \in \xCinfty(\mathscr{C};[0,1])$ with~$\operatorname{supp}(\chi) \subset \mathcal{O}$, and which only depends on~$x_2$, is given via
	\begin{equation}\label{equation:chi}
		\chi(\xvec{x}) \coloneq \widetilde{\chi}(x_2-H_1-l_K),
	\end{equation}
	where $\widetilde{\chi} \in \xCinfty(\mathbb{T};[0,1])$ is any profile satisfying
	\begin{equation}\label{equation:chip}
		\begin{gathered}
			\operatorname{supp}(\widetilde{\chi}) \subset (0, l_K), \quad \forall x \in (0,l_K/4)\colon  \widetilde{\chi}(x) + \widetilde{\chi}(x + 3l_K/4) = 1,\\
			\widetilde{\chi}(s) = 1 \iff s \in \left[l_K/4, 3l_K/4\right].
		\end{gathered}
	\end{equation}
	
	To state the next result, the reference time interval~$[0,1]$ is partitioned equidistantly by
	\begin{equation}\label{equation:edp}
		0 < t^0_c < t^1_a < t^1_b < t^1_c < t^2_a < t^2_b < t^2_c < \dots < t^K_a < t^K_b < t^K_c < 1.
	\end{equation}
	
	\begin{thrm}\label{theorem:convection}
		There exists $\overline{\xvec{y}} = [0, \overline{y}_2]^{\top} \in \xCinfty_0((0,1); \mathbb{R}^2)$, denoting by~$\xsym{\mathcal{Y}}$ its flow obtained via $\smash{\xdrv{\xsym{\mathcal{Y}}}{t}(\xvec{x}, s, t) = \overline{\xvec{y}}(t)}$ and $\xsym{\mathcal{Y}}(\xvec{x}, s, s) = \xvec{x}$, such that one has the properties:
		\begin{enumerate}[P1)]
			\item Supported in $(t^0_c, t^K_c)$: 	$\forall t \in [0, t^0_c] \cup [t^K_c, 1]\colon \overline{\xvec{y}}(t) = \xvec{0}$;
			\item Closed integral curves: $\forall \xvec{x} \in \mathscr{C} \colon \xsym{\mathcal{Y}}(\xvec{x},0,1) = \xvec{x}$;
			\item Stationary visits of $\mathcal{O}$: $\forall i \in \{1,\dots,K\} \colon \,\xsym{\mathcal{Y}}(\mathcal{O}_i, 0, [t^i_a, t^i_b]) = \mathcal{O}$.
		\end{enumerate}
	\end{thrm}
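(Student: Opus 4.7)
The plan is to reduce the construction to designing a single scalar function $Y(t) = \int_0^t \overline{y}_2(\sigma) \, \xdx{\sigma}$. Since $\overline{\xvec{y}} = [0, \overline{y}_2]^{\top}$ depends only on time, the flow is a pure vertical translation: $\xsym{\mathcal{Y}}(\xvec{x}, s, t) = \xvec{x} + [0, Y(t) - Y(s)]^{\top}$, with the second coordinate understood modulo~$2\pi$. The three properties then translate into constraints on $Y$: P1 becomes the requirement that $Y$ be constant outside $[t^0_c, t^K_c]$; P2 becomes $Y(1) \equiv Y(0) \pmod{2\pi}$; and P3 becomes $Y(t) \equiv s_i \pmod{2\pi}$ for $t \in [t^i_a, t^i_b]$, where $s_i \coloneq H_1 + l_K - 3(i-1)l_K/4$ is exactly the vertical offset needed to carry the base rectangle of $\mathcal{O}_i$ onto that of~$\mathcal{O}$.

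The key observation is that the successive targets $s_i$ decrease by $3l_K/4 = 2\pi/K$, using the defining relation $l_K = 8\pi/(3K)$. I would build $Y \in \xCinfty(\mathbb{R}; \mathbb{R})$ piecewise: set $Y \equiv 0$ outside $[t^0_c, t^K_c]$; smoothly and monotonically rise from $0$ to $s_1$ on $[t^0_c, t^1_a]$; hold $Y \equiv s_i$ on each plateau $[t^i_a, t^i_b]$; smoothly and monotonically descend from $s_i$ to $s_{i+1}$ on $[t^i_b, t^{i+1}_a]$ (a decrement of $3l_K/4$); finally, smoothly transition from $s_K$ back to $0$ on $[t^K_b, t^K_c]$. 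A standard $\xCinfty$ interpolation built from antiderivatives of scaled bump functions produces such a $Y$, and setting $\overline{y}_2 \coloneq Y'$ yields $\overline{\xvec{y}} \in \xCinfty_0((0,1); \mathbb{R}^2)$ with support contained in $[t^0_c, t^K_c]$.

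Verification of P1--P3 is then straightforward. P1 holds because $\overline{y}_2 = Y' \equiv 0$ outside $[t^0_c, t^K_c]$. P2 holds because $Y(1) = Y(0) = 0$ exactly, so the translation along the flow over $[0,1]$ is the zero vector. For P3, a point $\xvec{x} \in \mathcal{O}_i$ has $x_2 \in (3(i-1)l_K/4, 3(i-1)l_K/4 + l_K) \pmod{2\pi}$, and at any $t \in [t^i_a, t^i_b]$ it is mapped to $(x_1, x_2 + s_i)$, whose second coordinate modulo $2\pi$ lies in $(H_1 + l_K, H_1 + 2l_K)$, precisely the vertical range of~$\mathcal{O} \subset \omegaup$. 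The only mild subtlety is that $\mathcal{O}_K$'s second-coordinate interval wraps around $2\pi$ on $\mathbb{T}$, but this is handled automatically by the torus identification, since the shift $s_K = H_1 + 7l_K/4 - 2\pi$ is precisely what unwraps it onto the non-wrapping interval of~$\mathcal{O}$. I do not foresee a real obstacle: the construction is essentially a bookkeeping exercise hinging on the divisibility $3Kl_K/4 = 2\pi$ that was wired into the choice of $K$ and $l_K$ at the outset.
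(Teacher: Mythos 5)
Your proposal is correct and follows essentially the same route as the paper: both construct $\overline{\xvec{y}}$ as a purely vertical, time-dependent translation whose accumulated displacement is held constant on each $[t^i_a,t^i_b]$ at exactly the offset carrying $\mathcal{O}_i$ onto $\mathcal{O}$. The only (immaterial) difference is the interpolation between plateaus---the paper resets the displacement to zero after each visit via a pair $\pm\beta_i$, whereas you chain the offsets directly using $s_{i+1}-s_i=-3l_K/4$; both give $\xsym{\mathcal{Y}}(\cdot,0,1)=\mathrm{id}$ and leave the downstream use in \Cref{theorem:ControlLin} unchanged.
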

	\begin{proof}
		The construction from \cite{NersesyanRissel2024,NersesyanRissel2022} works here as well. Recalling that $\overline{t} \coloneq t^0_c$ is the width of the partition in~\eqref{equation:edp}, one selects $(\beta_i)_{i\in\{1,\dots,K\}} \subset \xCinfty_0((0, \overline{t});\mathbb{R})$ such that $\mathcal{O}_i + \xsym{\mathscr{g}}\smallint_0^{\overline{t}} \beta_i(s) \, \xdx{s} = \mathcal{O}$.
		Then, one defines
		\[
			\overline{\xvec{y}}(t) \coloneq \begin{cases}
				\xsym{0} & \mbox{ if } t \in [0, t^0_c] \cup [t^K_c, 1],\\
				\xvec{h}_i(t-(3i-2)\overline{t}) & \mbox{ if } t \in (t^{i-1}_c, t^i_c) \mbox{ for } i \in \{1,\dots, K\},
			\end{cases}
		\]
		where $\xvec{h}_i(t) = \beta_i(t) \xsym{\mathscr{g}}$ if $ t \in [0, \overline{t}]$, $\xvec{h}_i(t) = \xsym{0}$ if $ t \in (\overline{t}, 2\overline{t})$, and $\xvec{h}_i(t) = -\beta_i(t - 2\overline{t}) \xsym{\mathscr{g}}$ if $t \in [2\overline{t}, 3\overline{t}]$.
	\end{proof}
		
	\begin{rmrk}\label{remark:return}
		Any $\overline{\xvec{y}}$ from \Cref{theorem:ControlLin} is a reference velocity in the spirit of the return method as developed for the incompressible Euler and Navier--Stokes equations (\cf~\cite[Part 2, Chapter 6]{Coron2007}). Indeed, it holds $\overline{\xvec{y}}(0) = \overline{\xvec{y}}(1) = 0$, and the integral curves of $\overline{\xvec{y}}$ all cross the control zone. Moreover, a special trajectory for a controlled inviscid Boussinesq system is given by $(\overline{\xvec{u}}, \overline{\theta}) \coloneq (\overline{\xvec{y}}, \overline{y}_2' \chi / \smallint_{\mathscr{C}} \chi(\xvec{z}) \, \xdx{\xvec{z}})$, which solves the controllability problem
		\begin{gather*}
			\partial_t \overline{\xvec{u}} + (\overline{\xvec{u}} \cdot \xnab)\overline{\xvec{u}} + \xnab \overline{p} = \overline{\theta} \xsym{\mathscr{g}}, \quad \xdiv{\overline{\xvec{u}}} = 0, \quad \partial_t\overline{\theta} + (\overline{\xvec{u}} \cdot \xnab)\overline{\theta} = \mathbb{I}_{\omegaup}\overline{\eta}, \\
			\overline{\xvec{u}}|_{\Gamma} \cdot \xvec{n} = 0, \quad (\xwcurl{\overline{\xvec{u}}})|_{\Gamma} = 0, \quad \partial_{\xvec{n}}\overline{\theta} |_{\Gamma} = 0, \\ 
			\overline{\xvec{u}}(\cdot, 0) = \overline{\xvec{u}}(\cdot, 1) = 0, \quad \overline{\theta}(\cdot, 0) = \overline{\theta}(\cdot, 1) = 0,
		\end{gather*}
		where
		\[
			\overline{p}(\xvec{x},t) = \int_0^{x_2}\left(\frac{\overline{y}_2'(t) \chi(s)}{\int_{\mathscr{C}} \chi(\xvec{z}) \, \xdx{\xvec{z}}} - \overline{y}_2'(t)\right) \, \xdx{s}, \quad \overline{\eta}(\xvec{x},t) = \frac{(\overline{y}_2''\chi + \overline{y}_2'(\overline{\xvec{y}} \cdot \xnab)\chi)(\xvec{x}, t)}{\int_{\mathscr{C}} \chi(\xvec{z}) \, \xdx{\xvec{z}}}.
		\]
	\end{rmrk}

	Next, we demonstrate the approximate controllability of a linear transport problem with drift $\overline{\xvec{y}}$ and a smooth localized control. Since the drift field~$\overline{\xvec{y}}$ is tangential to~$\mathscr{C}$ at~$\Gamma$, no boundary conditions are required.
	\begin{thrm}\label{theorem:ControlLin}
		Fix any $m \in \mathbb{N}$, $\varepsilon > 0$, and target $\theta_1 \in \xHn{m}(\mathscr{C};\mathbb{R})$. There exists a control $g \in \xCinfty(\mathscr{C}\times[0,1];\mathbb{R})$ such that the solution $\theta \in \xCinfty(\mathscr{C}\times[0,1];\mathbb{R})$ to the linear problem
		\begin{equation*}\label{equation:BoussinesqLinearizedNonlocalized}
			\begin{gathered}
				\partial_t \theta + (\overline{\xvec{y}} \cdot \xnab) \theta = \mathbb{I}_{\omegaup}g, \quad \theta(\cdot, 0) = 0
			\end{gathered}
		\end{equation*}
		obeys $\|\theta(\cdot,1) - \theta_1\|_{m-1} < \varepsilon \|\theta_1\|_m$. Moreover, for a constant $C_{\varepsilon} > 0$ depending only on~$\varepsilon$, the control can be chosen such that $\|g\|_{ \xLtwo([0,1];\xHn{{m}}(\mathscr{C};\mathbb{R}))} < C_{\varepsilon} \|\theta_1\|_m$.
	\end{thrm}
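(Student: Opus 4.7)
The plan is to exploit property P3 of \Cref{theorem:convection}: during each stationary window $[t_a^i,t_b^i]$ the rectangle $\mathcal{O}_i$ is translated onto the reference box $\mathcal{O}\subset\omegaup$. Combined with a partition of unity subordinate to $\{\mathcal{O}_i\}_{i=1}^K$, this lets me ``deposit'' the $i$-th piece of $\theta_1$ into $\mathcal{O}$ during the $i$-th window; because the flow closes up to the identity at $t=1$ by P2, each deposited piece rides back to $\mathcal{O}_i$ at final time and the pieces reassemble into $\theta_1$.

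Concretely, I would first extract a partition of unity $\{\chi_i\}_{i=1}^K$ on $\mathscr{C}$ subordinate to $\{\mathcal{O}_i\}$ by setting $\chi_i(\xvec{x})\coloneq\widetilde{\chi}(x_2-3(i-1)l_K/4)$; the matching identity in \eqref{equation:chip} together with $K\cdot 3l_K/4=2\pi$ forces $\sum_i\chi_i\equiv 1$ on $\mathscr{C}$. Next, mollify $\theta_1$ at a scale $h=h(\varepsilon)>0$ so that $\theta_1^h\coloneq\theta_1*\rho_h\in\xCinfty(\mathscr{C};\mathbb{R})$ obeys $\|\theta_1^h-\theta_1\|_{m-1}<\varepsilon\|\theta_1\|_m$. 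Finally, pick nonnegative $\eta_i\in\xCinfty_0((t_a^i,t_b^i);\mathbb{R})$ with pairwise disjoint supports and $\int_0^1\eta_i(s)\,\xdx{s}=1$, and define
\[
g(\xvec{x},t)\coloneq\sum_{i=1}^{K}\eta_i(t)\,(\chi_i\theta_1^h)\Big(\xvec{x}-\int_0^t\overline{\xvec{y}}(\tau)\,\xdx{\tau}\Big)\in\xCinfty(\mathscr{C}\times[0,1];\mathbb{R}).
\]

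Since $\overline{\xvec{y}}$ depends only on $t$ and $\int_0^1\overline{\xvec{y}}=0$ by P2, the flow is the translation $\xsym{\mathcal{Y}}(\xvec{x},0,s)=\xvec{x}+\int_0^s\overline{\xvec{y}}$, and the characteristic through $(\xvec{x},1)$ emanates from $\xvec{x}$ at time $0$. The method of characteristics therefore yields
\[
\theta(\xvec{x},1)=\int_0^1\mathbb{I}_{\omegaup}(\xsym{\mathcal{Y}}(\xvec{x},0,s))\,g(\xsym{\mathcal{Y}}(\xvec{x},0,s),s)\,\xdx{s}=\sum_{i=1}^K(\chi_i\theta_1^h)(\xvec{x})\int_{t_a^i}^{t_b^i}\eta_i(s)\,\mathbb{I}_{\omegaup}(\xsym{\mathcal{Y}}(\xvec{x},0,s))\,\xdx{s},
\]
the shift in the definition of $g$ being chosen so that $\xsym{\mathcal{Y}}(\xvec{x},0,s)-\int_0^s\overline{\xvec{y}}=\xvec{x}$ exactly cancels the flow. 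For $\xvec{x}\in\operatorname{supp}(\chi_i)\subset\mathcal{O}_i$, property P3 ensures $\xsym{\mathcal{Y}}(\xvec{x},0,s)\in\mathcal{O}\subset\omegaup$ on $[t_a^i,t_b^i]$, so the inner integral equals $\int_0^1\eta_i=1$; for $\xvec{x}\notin\mathcal{O}_i$ the factor $\chi_i(\xvec{x})$ vanishes. Hence $\theta(\cdot,1)=\sum_i\chi_i\theta_1^h=\theta_1^h$, and the mollification estimate delivers the claimed $\xHn{m-1}$-approximation.

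The control-norm bound follows from translation invariance and Leibniz: $\|g(\cdot,t)\|_m\lesssim\sum_i\eta_i(t)\,\|\chi_i\theta_1^h\|_m\lesssim(\max_i\|\eta_i\|_{\xLinfty})\,\|\theta_1\|_m$, and squaring and integrating in $t$ gives $\|g\|_{\xLtwo((0,1);\xHn{m}(\mathscr{C};\mathbb{R}))}<C_\varepsilon\|\theta_1\|_m$, with $C_\varepsilon$ absorbing the constants depending on $\widetilde\chi$ and on the $\eta_i$. I do not foresee a major obstacle: the argument is essentially bookkeeping once the return-flow and partition-of-unity picture is set up, the one small ``trick'' being the translation by $\int_0^t\overline{\xvec{y}}$ inside $g$, which is precisely what makes the flow carry $\operatorname{supp}(\chi_i)$ onto $\mathcal{O}\subset\omegaup$ throughout $[t_a^i,t_b^i]$ so that the indicator $\mathbb{I}_{\omegaup}$ does not mutilate the $i$-th deposition.
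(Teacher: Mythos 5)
Your argument is correct and rests on the same mechanism as the paper's proof --- the return flow of \Cref{theorem:convection} combined with the partition of unity generated by the translates of $\widetilde{\chi}$ --- but the implementation is genuinely more direct. The paper first solves the \emph{non-localized} problem along the explicit trajectory $\kappa(t)\widetilde{\theta}_1$, producing a global control $\widetilde{g}$, and then localizes it by replaying a time-rescaled copy of $\widetilde{g}$ (evaluated along the flow and multiplied by $\chi$) inside every window $[t_a^k,t_b^k]$; the reassembly of the target then hinges on the identity $\sum_k\chi\bigl(\xsym{\mathcal{Y}}(\xvec{x},0,r(t_b^k-t_a^k)+t_a^k)\bigr)=1$, which is exactly your $\sum_i\chi_i\equiv 1$ read along characteristics. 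You instead decompose the target spatially as $\sum_i\chi_i\theta_1^h$ and deposit the $i$-th piece during the $i$-th stationary window. This buys three small advantages: your $g$ is smooth in space-time from the outset (the paper's control \eqref{equation:eqg} contains $\mathbb{I}_{[t_a^k,t_b^k]}(t)$ and must be mollified in time at the end); no derivative falls on the target, so you only need $\|\theta_1^h\|_m\lesssim\|\theta_1\|_m$ rather than the paper's $\|\widetilde{\theta}_1\|_{m+1}<\widetilde{C}_{\varepsilon}\|\theta_1\|_m$, and your constant in the control bound could even be taken independent of $\varepsilon$; and the characteristics computation collapses to one line. Two points you should make explicit in a write-up: the mollification must be preceded by a Sobolev extension across $\Gamma=\{-1,1\}\times\mathbb{T}$, since $\mathscr{C}$ is periodic only in the $x_2$-direction; and the verification that $\operatorname{supp}\bigl(g(\cdot,t)\bigr)\subset\mathcal{O}_i+\smallint_0^t\overline{\xvec{y}}(\tau)\,\xdx{\tau}=\mathcal{O}\subset\omegaup$ for $t\in[t_a^i,t_b^i]$ (via \emph{P3}), which is precisely what renders the indicator $\mathbb{I}_{\omegaup}$ inert both in the equation and in the Duhamel integral along characteristics.
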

	\begin{proof}
		By density and compactness arguments, we take $\widetilde{\theta}_1 \in \xCinfty(\mathscr{C};\mathbb{R})$ with $\|\theta_1 - \widetilde{\theta}_1\|_{m-1} < \varepsilon\|\theta_1\|_m$ and such that $\|\widetilde{\theta}_1\|_{m+1} < \widetilde{C}_{\varepsilon} \|\theta_1\|_m$ for some $\widetilde{C}_{\varepsilon} > 1$ depending only on $\varepsilon$. Then, we define $\widetilde{\theta}(\cdot, t) \coloneq \kappa(t) \widetilde{\theta}_1$, where $\kappa \in \xCinfty([0, 1]; [0,1])$ obeys $\operatorname{supp}(\kappa) \subset (0, 1]$ and $\kappa(1) = 1$. Moreover, we set $\widetilde{g} \coloneq \partial_t \widetilde{\theta} + (\overline{\xvec{y}} \cdot \xnab) \widetilde{\theta}$ and note that $\widetilde{\theta}$ solves the problem
		\begin{equation}\label{equation:tt}
			\partial_t \widetilde{\theta} + (\overline{\xvec{y}} \cdot \xnab) \widetilde{\theta} = \widetilde{g}, \quad \widetilde{\theta}(\cdot, 0) = 0
		\end{equation}
		and satisfies $\|\widetilde{\theta}(\cdot,1) - \theta_1\|_{m-1} < \varepsilon\|\theta_1\|_m$. 
		
		Now, we claim that by using instead of $\widetilde{g}$ the control
		\begin{equation}\label{equation:eqg}
			g(\xvec{x}, t) \coloneq \chi(x_2) \sum_{k=1}^K  \frac{1}{t_b^k-t_a^k} \mathbb{I}_{[t_a^k, t_b^k]}(t) \widetilde{g}\left( \xsym{\mathcal{Y}}\left(\xvec{x}, t, \frac{t-t_a^k}{t_b^k-t_a^k}\right), \frac{t-t_a^k}{t_b^k-t_a^k} \right),
		\end{equation}
		the solution to
		\begin{equation}\label{equation:BoussinesqLinearizedlocalizedA}
			\begin{gathered}
				\partial_t \theta + (\overline{\xvec{y}} \cdot \xnab) \theta = \mathbb{I}_{\omegaup}g, \quad \theta(\cdot, 0) = 0
			\end{gathered}
		\end{equation}
		will likewise obey $\|\theta(\cdot,1) - \theta_1\|_{m-1} < \varepsilon\|\theta_1\|_m$.
		To see this, we employ the well-known solution formulas for the involved transport problems. First, as $\xsym{\mathcal{Y}}$ is the flow that governs \eqref{equation:tt} and \eqref{equation:BoussinesqLinearizedlocalizedA}, it holds
		\begin{equation}\label{equation:sfmc}
			\widetilde{\theta}(\xvec{x}, 1) = \int_0^1 \widetilde{g}(\xsym{\mathcal{Y}}(\xvec{x}, 0, r), r) \, \xdx{r}, \quad \theta(\xvec{x}, 1) = \int_0^1 g(\xsym{\mathcal{Y}}(\xvec{x}, 0, s), s) \, \xdx{r},
		\end{equation}
		noting that $\operatorname{supp}(\chi) \subset \omegaup$, and recalling from \Cref{theorem:convection} that $\xsym{\mathcal{Y}}(\xvec{x}, 0, 1) = \xsym{\mathcal{Y}}(\xvec{x}, 1, 0) = \xvec{x}$ for $\xvec{x} \in \mathscr{C}$.
		Second, it can be shown as follows that both integrals in \eqref{equation:sfmc} have the same value:
		\begin{multline}\label{equation:locstr}
			\int_0^1 g(\xsym{\mathcal{Y}}(\xvec{x}, 0, s), s) \, \xdx{s} \\
			\begin{aligned}
				& = \sum_{k=1}^K  \int_0^1 \frac{\mathbb{I}_{[t_a^k, t_b^k]}(s)}{t_b^k-t_a^k}  \chi(\xsym{\mathcal{Y}}(\xvec{x}, 0, s)) \widetilde{g} \left(\xsym{\mathcal{Y}}\left(\xvec{x}, 0 , \frac{s-t_a^k}{t_b^k-t_a^k} \right), \frac{s-t_a^k}{t_b^k-t_a^k}\right) \, \xdx{s} \\
				& = \sum_{k=1}^K \int_0^1 \chi\left( \xsym{\mathcal{Y}}\left(\xvec{x}, 0, r (t_b^k-t_a^k) + t_a^k \right)\right) \widetilde{g} (\xsym{\mathcal{Y}}(\xvec{x}, 0 , r), r) \, \xdx{r} \\
				& = \int_0^1 \widetilde{g}(\xsym{\mathcal{Y}}(\xvec{x}, 0, r), r) \, \xdx{r},
			\end{aligned}
		\end{multline}
		where we used {\it P1-P3} from \Cref{theorem:convection}, the substitutions $r = (s-t_a^k)(t_b^k-t_a^k)^{-1}$ for $k \in \{1,\dots, K\}$, and the properties of~$\chi$ from \eqref{equation:chip}. In particular, regarding the last equality in \eqref{equation:locstr}, we note that $\chi$ generates the partition of unity $(\chi(\cdot + 3(i-1)l_{K}/4))_{i\in\{1,\dots,K\}}$, and one can employ {\it P3}, because $r (t_b^k-t_a^k) + t_a^k \in [t_a^k, t_b^k]$ for all $r \in [0,1]$; as $\overline{\xvec{y}}$ only depends on time, this yields for any fixed $\xvec{x} \in \mathscr{C}$ that
		\[
			\sum_{k=1}^K \chi\left( \xsym{\mathcal{Y}}\left(\xvec{x}, 0, r (t_b^k-t_a^k) + t_a^k \right)\right) = 1, \quad r \in [0,1].
		\]
		
		The bound for $\smash{\|g\|_{ \xLtwo([0,1];\xHn{{m}}(\mathscr{C};\mathbb{R}))}}$ can be concluded from \eqref{equation:eqg} and the definition of~$\widetilde{\theta}$, noting that $\overline{\xvec{y}}$ and its flow~$\xsym{\mathcal{Y}}$ are smooth and universally fixed. Also, at first it only holds $g \in \xLtwo([0,1];\xCinfty(\mathscr{C};\mathbb{R}))$, but we can approximate $g$ by a $\xCinfty(\mathscr{C}\times[0,1];\mathbb{R})$ version while maintaining $\|\theta(\cdot,1) - \theta_1\|_{m-1} < \varepsilon \|\theta_1\|_m$.
	\end{proof}
	
	\begin{rmrk}\label{remark:nm}
		In the proof of \Cref{theorem:ControlLin} with $m \geq 2$, if one approximates $\smash{\theta_1 \in \xHn{{m}}_{\operatorname{N}}}$ in the there-described way by $\widetilde{\theta}_1 \in \xHn{{m+1}}_{\operatorname{N}}$, then a direct calculation, using the properties of $\overline{\xvec{y}}$ and $\chi$ in \eqref{equation:eqg}, provides $\partial_{\xvec{n}} \mathbb{I}_{\omegaup} g (\cdot, t) |_{\Gamma} = 0$ for $t \in [0,1]$.
	\end{rmrk}
	
	Now, we steer the temperature in the nonlinear problem, while ensuring that the final vorticity is a small perturbation of the initial one; for the torus case, see~\cite{NersesyanRissel2024}. The employed return method argument is inspired by \cite{Coron96} and \cite{Nersesyan2021,NersesyanRissel2022}, and the idea is to view a scaled solution to \eqref{equation:Boussinesq} on a small time interval as a perturbation of an accordingly scaled return method trajectory, as described in \Cref{remark:return}.
	\begin{thrm}\label{theorem:HDS}
		Let $T > 0$, $\varepsilon > 0$, $(w_0, \theta_0, \theta_1) \in \xHn{{2}}_{0} \times \xHn{{3}}_{\operatorname{N}} \times \xHn{{3}}_{\operatorname{N}}$, and~forces $\varphi = \xwcurl{\xsym{\Phi}}$ and $\psi$ given by \Cref{theorem:main}. There exists $\delta > 0$ and $\eta \in \xCinfty(\mathscr{C}\times[0,\delta]; \mathbb{R})$ such that
		\begin{equation}\label{equation:cc}
			\|S_{\delta}\left(w_0, \theta_0, \varphi, \psi + \eta\right) - (w_0, \theta_1)\|_{\xHn{{1}} \times \xHn{{2}}} < \varepsilon.
		\end{equation}
		Moreover, the velocity $\xvec{u}_{\delta}$, associated to $w_0$, $\theta_0$, $\varphi$, $\psi$, and $\eta$ via \eqref{equation:vorticity}, obeys
		\[
			\int_{\mathscr{C}} \xvec{u}_{\delta}(\xvec{x}, t) \cdot \xsym{\mathscr{g}} \, \xdx{\xvec{x}} = [0, \overline{y}_{2,\delta}(t)]^{\top} = \overline{\xvec{y}}_{\delta}(t) \coloneq \delta^{-1} \overline{\xvec{y}}(\delta^{-1}t), \quad t \in [0, \delta].
		\]
	\end{thrm}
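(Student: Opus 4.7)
The strategy is a small-time return-method argument on the interval $[0, \delta]$, using as reference the inviscid trajectory from \Cref{remark:return} rescaled via $\overline{\xvec{y}}_\delta(t) \coloneq \delta^{-1}\overline{\xvec{y}}(t/\delta)$, $\overline{\theta}_\delta(\xvec{x}, t) \coloneq \delta^{-2}\overline{y}_2'(t/\delta)\chi(\xvec{x})/\smallint_\mathscr{C}\chi(\xvec{z})\,\xdx{\xvec{z}}$, and $\overline{\eta}_\delta(\xvec{x}, t) \coloneq \delta^{-3}\overline{\eta}(\xvec{x}, t/\delta)$. Since $\overline{\xvec{y}}_\delta$ is spatially constant, the pair $(\overline{\xvec{y}}_\delta, \overline{\theta}_\delta)$ satisfies the inviscid Boussinesq equations on $[0, \delta]$ with control $\overline{\eta}_\delta$ supported in $\omegaup$. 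The only mismatch with the viscous problem \eqref{equation:Boussinesq} is $-\tau\Delta\overline{\theta}_\delta$, which is again supported in $\omegaup$ because $\chi$ is, and can therefore be absorbed into~$\eta$.

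For the temperature steering from $\theta_0$ to a target near $\theta_1$, I apply \Cref{theorem:ControlLin} with $m = 3$ to $\theta_1 - \theta_0 \in \xHn{3}_{\operatorname{N}}\cap\overline{\xH}$, producing a smooth control $g \in \xCinfty(\mathscr{C}\times[0,1];\mathbb{R})$ supported in $\omegaup\times(0,1)$. Combined with the closed-flow property $\xsym{\mathcal{Y}}(\xvec{x}, 0, 1) = \xvec{x}$ of \Cref{theorem:convection} and linearity of transport, this yields a solution $\Theta^\star$ on $[0, 1]$ starting at $\theta_0$ and approximating $\theta_1$ in $\xHn{2}$ at time~$1$. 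Time-rescaling produces $\theta^\star(\xvec{x}, t) \coloneq \Theta^\star(\xvec{x}, t/\delta)$ on $[0, \delta]$ with rescaled source $g_\delta(\xvec{x}, t) \coloneq \delta^{-1}g(\xvec{x}, t/\delta)$. I then choose $\eta$ as a smooth version of $\overline{\eta}_\delta + \tau\Delta\overline{\theta}_\delta + g_\delta + \mu$, where $\mu$ is a small mean-correction supported in $\omegaup$ guaranteeing $\smallint_\mathscr{C}\eta(\cdot, t)\,\xdx{\xvec{x}} = \delta^{-3}\overline{y}_2''(t/\delta)$ for all $t$; by the velocity-average identity built into \eqref{equation:vorticity}, this delivers the exact constraint $\smallint_\mathscr{C}\xvec{u}\cdot\xsym{\mathscr{g}}\,\xdx{\xvec{x}} = \overline{y}_{2,\delta}(t)$ stated in the theorem.

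With the ansatz $(\xvec{u}, \theta) = (\overline{\xvec{y}}_\delta + \xvec{v},\; \overline{\theta}_\delta + \theta^\star + \vartheta)$, the perturbations $(\xvec{v}, \vartheta)$ start at $(\xvec{u}_0, 0)$ and satisfy a coupled viscous system with the expected boundary conditions. Since $\overline{\xvec{y}}_\delta(\delta) = 0$, $\overline{\theta}_\delta(\cdot, \delta) = 0$, and $\theta^\star(\cdot, \delta)\to\theta_1$ in $\xHn{2}$, the conclusion \eqref{equation:cc} reduces to proving $\|\xwcurl\xvec{v}(\cdot, \delta) - w_0\|_1 + \|\vartheta(\cdot, \delta)\|_2 \to 0$ as $\delta\to 0$. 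The vorticity $\xwcurl\xvec{v}$ is transported along the closed integral curves of $\overline{\xvec{y}}_\delta + \xvec{v}$ (property P2 of \Cref{theorem:convection}) with bounded forcing $\partial_1(\theta^\star + \vartheta) + \varphi$, exploiting the fact that $\partial_1\overline{\theta}_\delta = 0$ because $\chi$ depends only on~$x_2$; over $[0, \delta]$ this contributes $O(\delta)$. The $\vartheta$-equation is of convection-diffusion type with forcing $\tau\Delta\theta^\star$, $(\xvec{v}\cdot\xnab)\theta^\star$, and $\psi$, each bounded and integrating to $O(\delta)$.

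The principal obstacle is the remaining coupling $(\xvec{v}\cdot\xnab)\overline{\theta}_\delta = v_2\,\partial_{x_2}\overline{\theta}_\delta$, which is pointwise of size $\delta^{-2}$ and incompatible with a direct Gr\"onwall estimate. The rescue is the cancellation $\smallint_0^1\overline{y}_2'(s)\,\xdx{s} = \overline{y}_2(1) - \overline{y}_2(0) = 0$, which furnishes the time-primitive $\Xi_\delta(\xvec{x}, t) \coloneq \delta^{-1}\overline{y}_2(t/\delta)\chi(\xvec{x})/\smallint_\mathscr{C}\chi(\xvec{z})\,\xdx{\xvec{z}}$, satisfying $\partial_t\Xi_\delta = \overline{\theta}_\delta$, $\Xi_\delta(\cdot, 0) = \Xi_\delta(\cdot, \delta) = 0$, and $\|\Xi_\delta\|_{\xLinfty((0,\delta);\xHn{k})}\lesssim\delta^{-1}$. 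In the $\xLtwo$ and $\xHone$ energy estimates on $(\xvec{v}, \vartheta)$ (derived as in the proof of \Cref{theorem:LCST}), the dangerous integral $\smallint_0^\delta\smallint_\mathscr{C}(\xvec{v}\cdot\xnab)\overline{\theta}_\delta\,\vartheta\,\xdx{\xvec{x}}\xdx{t}$ is integrated by parts in time, transferring $\partial_t$ onto $\xvec{v}$ and~$\vartheta$; substituting their evolution equations expresses everything through bounded spatial operators paired with $\Xi_\delta$, and every residual factor $\delta^{-1}$ is compensated by a factor $\delta^{1/2}$ or $\delta$ arising from integration on the short window $[0, \delta]$. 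A Gr\"onwall argument analogous to the fixed-point step at the end of the proof of \Cref{theorem:LCST} closes the estimates, yielding the required convergence.
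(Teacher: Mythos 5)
Your overall architecture matches the paper's: a rescaled return-method trajectory $(\overline{\xvec{y}}_\delta,\overline{\theta}_\delta)$ with $\overline{\theta}_\delta=\overline{y}_{2,\delta}'\chi/\smallint_{\mathscr{C}}\chi$, temperature steering via \Cref{theorem:ControlLin} along the closed flow of \Cref{theorem:convection}, an additive ansatz whose remainders must vanish as $\delta\to0$, and a mean-adjustment of $\eta$ to enforce the velocity-average identity. However, your treatment of the term you correctly single out as the principal obstacle, $(\xvec{v}\cdot\xnab)\overline{\theta}_\delta=v_2\,\partial_2\overline{\theta}_\delta$, does not close. After integrating by parts in time against the primitive $\Xi_\delta$ (of size $\delta^{-1}$), you must substitute the evolution equations for $\partial_t\xvec{v}$ and $\partial_t\vartheta$; but $\partial_t\vartheta$ itself contains the very term $-(\xvec{v}\cdot\xnab)\overline{\theta}_\delta$ of pointwise size $\delta^{-2}$, so the substituted integral $\smallint_0^\delta\smallint_{\mathscr{C}} v_2\,\partial_2\Xi_\delta\,(\xvec{v}\cdot\xnab)\overline{\theta}_\delta$ scales like $\delta\cdot\delta^{-1}\cdot\delta^{-2}=\delta^{-2}$ — strictly worse than what you started with. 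Likewise $(\overline{\xvec{y}}_\delta\cdot\xnab)\vartheta\sim\delta^{-1}$ paired with $\partial_2\Xi_\delta\sim\delta^{-1}$ over a window of length $\delta$ still yields $\delta^{-1}$. The cancellation $\smallint_0^1\overline{y}_2'=0$ only helps if the factors $v_2$ and $\vartheta$ were frozen in time; the error terms carry their time derivatives, which are exactly as singular as the term being treated, so no power of $\delta$ is gained and the Gr\"onwall argument cannot absorb a $\delta^{-1}$ contribution in the exponent.

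The paper avoids this entirely by exploiting that $\xnab\overline{\theta}_\delta$ is supported in $\operatorname{supp}(\chi)\subset\mathcal{O}\subset\omegaup$: the full convection of the reference temperature by the full velocity, $(\xvec{u}_\delta\cdot\xnab)(\overline{y}_{2,\delta}'\chi/\smallint_{\mathscr{C}}\chi)=\overline{y}_{2,\delta}'(\xvec{u}_\delta\cdot\xnab)\chi/\smallint_{\mathscr{C}}\chi$, is itself an admissible localized control and is added to $\eta_\delta$ in \eqref{equation:mc}, cancelling the dangerous term exactly rather than estimating it. This makes the control depend on the solution, which the paper resolves with the consistency identity $\xvec{u}_\delta=\widetilde{\xvec{u}}_\delta$ in \eqref{equation:uequtilde}. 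Your proof would be repaired by the same device — augment your $\mu$-correction by $\overline{y}_{2,\delta}'(\xvec{v}\cdot\xnab)\chi/\smallint_{\mathscr{C}}\chi$ and justify the resulting implicit definition of $\eta$ — but as written the energy estimates in your final paragraph do not go through. A further, more cosmetic, difference is that the paper runs the linear control problem at amplitude $\delta$ (data $\delta\theta_0$, target $\delta(\theta_1-\theta_0)$) and rescales, which is what produces the quantitative bound \eqref{equation:tvartassymptassump} ensuring the vorticity drifts only by $O(\delta)$; your version states the analogous $O(\delta)$ claims but should make this normalization explicit to justify them.
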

	\begin{proof}
		First, a family of auxiliary controls is constructed via \Cref{theorem:ControlLin}. Second, a controlled nonlinear trajectory is fixed. Third, asymptotic expansions are proposed for verifying~\eqref{equation:cc}. Finally, the remainder estimates are given.
		
		\paragraph{Step 1. Controlling a family of linear problems.}
		For~$\delta \in (0, 1)$, we denote by $\widetilde{\Theta}_{\delta}$ the unique solution to the uncontrolled transport equation with scaled temperature initial data
		\begin{equation}\label{equation:FreeBoussinesqLinearizedlocalized}
			\begin{gathered}
				\partial_t \widetilde{\Theta}_{\delta} + (\overline{\xvec{y}} \cdot \xnab) \widetilde{\Theta}_{\delta} = 0, \quad
				\widetilde{\Theta}_{\delta}(\cdot, 0) = \delta\theta_0, 
			\end{gathered}
		\end{equation}
		emphasizing that $\partial_{\xvec{n}} \widetilde{\Theta}_{\delta}(\cdot, t) |_{\Gamma} = 0$ for all $t \in [0, 1]$ due to $\theta_0 \in \xHn{{3}}_{\operatorname{N}}$. Since~$\xsym{\mathcal{Y}}$ is the flow of~$\overline{\xvec{y}}$ from \Cref{theorem:convection}, we have $\widetilde{\Theta}_{\delta}(\cdot, 1) = \delta\theta_0$.
		Now, for any $\delta \in (0, 1)$, we apply \Cref{theorem:ControlLin}, in the way described by \Cref{remark:nm}, with the target state $\delta(\theta_1 - \theta_0)$. This yields a family of smooth controls $(g_{\delta})_{\delta\in(0,1)}$, spatially supported in $\omegaup$, such that the solution to the problem
		\begin{equation}\label{equation:ctrllinprb}
			\begin{gathered}
				\partial_t \widehat{\Theta}_{\delta} + (\overline{\xvec{y}} \cdot \xnab) \widehat{\Theta}_{\delta} = \mathbb{I}_{\omegaup}g_{\delta}, \quad
				\widehat{\Theta}_{\delta}(\cdot, 0) = 0
			\end{gathered}
		\end{equation} 
		satisfies $\|\widehat{\Theta}_{\delta}(\cdot,1) - \delta(\theta_1 - \theta_0)\|_2 < \varepsilon \delta\|\theta_1 - \theta_0\|_3$ and $\partial_{\xvec{n}} \widehat{\Theta}_{\delta}(\cdot, t) |_{\Gamma} = 0$. Using the linearity of \eqref{equation:FreeBoussinesqLinearizedlocalized} and \eqref{equation:ctrllinprb}, it follows that $\smash{\overline{\vartheta}_{\delta} \coloneq \widetilde{\Theta}_{\delta} + \widehat{\Theta}_{\delta}}$
		obeys
		\begin{equation}\label{equation:StLinearizedlocalized}
			\begin{gathered}
				\partial_t \overline{\vartheta}_{\delta} + (\overline{\xvec{y}} \cdot \xnab) \overline{\vartheta}_{\delta} = \mathbb{I}_{\omegaup}g_{\delta}, \\
				\overline{\vartheta}_{\delta}(\cdot, 0) = \delta\theta_0, \quad \|\overline{\vartheta}_{\delta}(\cdot, 1) - \delta \theta_1\|_2 < \varepsilon \delta\|\theta_1 - \theta_0\|_3.
			\end{gathered}
		\end{equation}
		However, in the steps below, we will work with the following average-free version:
		\[
			\widetilde{\vartheta}_{\delta}(\xvec{x}, t) \coloneq \overline{\vartheta}_{\delta}(\xvec{x}, t) - \frac{\chi(x_2) \int_0^t \int_{\mathscr{C}} g_{\delta}(\xvec{z}, s) \, \xdx{\xvec{z}} \xdx{s}}{\int_{\mathscr{C}} \chi(\xvec{z}) \, \xdx{\xvec{z}}},
		\]
		which obeys -- by \Cref{remark:nm}, \eqref{equation:StLinearizedlocalized}, and $\theta_0, \theta_1 \in \xHn{{3}}_{\operatorname{N}}$ -- the initial, target, and boundary conditions
		\begin{equation}\label{equation:intag}
			\widetilde{\vartheta}_{\delta}(0) = \delta \theta_0, \quad \|\widetilde{\vartheta}_{\delta}(1) - \delta\theta_1\|_2 < \varepsilon\delta\|\theta_1 - \theta_0\|_3, \quad \partial_{\xvec{n}}\widetilde{\vartheta}_{\delta} |_{\Gamma} = 0.
		\end{equation}
		Moreover, together with the control
		\begin{equation}\label{equation:definition_eta_delta}
			\begin{aligned}
				\widetilde{\eta}_{\delta}(\xvec{x}, t) \coloneq g_{\delta}(\xvec{x}, t) 
				- \frac{\overline{y}_2 \chi'(x_2) \int_0^t \int_{\mathscr{C}} g_{\delta}(\xvec{z}, s) \, \xdx{\xvec{z}}  \xdx{s} + \chi(x_2) \int_{\mathscr{C}} g_{\delta}(\xvec{z}, t) \, \xdx{\xvec{z}}}{\int_{\mathscr{C}} \chi(\xvec{z}) \, \xdx{\xvec{z}}},
			\end{aligned}
		\end{equation}
		it holds
		\begin{equation}\label{equation:wtvt}
			\partial_t \widetilde{\vartheta}_{\delta} + (\overline{\xvec{y}} \cdot \xnab) \widetilde{\vartheta}_{\delta} = \mathbb{I}_{\omegaup}\widetilde{\eta}_{\delta}.
		\end{equation}
		Meanwhile, we define the corresponding function $\widetilde{v}_{\delta}$ as the solution to
		\begin{equation}\label{equation:wtvtvv}
			\partial_t \widetilde{v}_{\delta} + (\overline{\xvec{y}} \cdot \xnab) \widetilde{v}_{\delta} = \partial_1 \widetilde{\vartheta}_{\delta}, \quad \widetilde{v}_{\delta}(\cdot, 0) = w_0,
		\end{equation}
		observing that $\widetilde{v}_{\delta} |_{\Gamma} = 0$ due to \eqref{equation:intag} and the choice of $w_0$. Direct estimates based on the solution representations for the transport problems in \cref{equation:FreeBoussinesqLinearizedlocalized,equation:ctrllinprb,equation:wtvtvv,equation:wtvt,equation:StLinearizedlocalized} provide
		\begin{equation}\label{equation:tvartassymptassump}
			\begin{gathered}
				\sup_{t \in [0, 1]} \|\widetilde{v}_{\delta}(\cdot, t) - w_0\|_{2} + \sup_{t \in [0, 1]} \|\widetilde{\vartheta}_{\delta}(\cdot, t)\|_{3} \lesssim C_{\varepsilon}\delta,
			\end{gathered}
		\end{equation}
		where $C_{\varepsilon} > 1$ is the fixed constant from \Cref{theorem:ControlLin} depending only on $\varepsilon$.
		
		\paragraph{Step 2. Controls for the nonlinear problem.}
		Given $(\widetilde{\eta}_{\delta})_{\delta \in (0,1)}$ from \eqref{equation:definition_eta_delta}, we aim to relate, for small $\delta$, the final states of suitably controlled trajectories of the nonlinear problem \eqref{equation:vorticity} to those of the linear systems  with parameter~$\delta$ constituted by \eqref{equation:wtvt} and \eqref{equation:wtvtvv}. Hereto, in the next step (see \eqref{equation:ansatz}), we will view~$\xvec{u}_{\delta}$ on the short time interval $[0, \delta]$ as a perturbation of the leading order profile $\overline{\xvec{y}}_{\delta}(t) = \delta^{-1} \overline{\xvec{y}}(\delta^{-1}t)$. But for this to be feasible, the velocity~$\xvec{u}_{\delta}$ should have the vertical average $\smash{\smallint_{\mathscr{C}} \xvec{u}_{\delta}(\xvec{x}, t) \cdot \xsym{\mathscr{g}}(\xvec{x}) \, \xdx{\xvec{x}} = \overline{\xvec{y}}_{\delta}(t)}$ for all $t \in [0, \delta]$. To ensure this property, we can act on the velocity average through the temperature control. Namely, for each $\delta \in (0, 1)$, we denote (\cf~\Cref{remark:tc})
		\begin{equation}\label{equation:mc}
			\eta_{\delta} \coloneq \delta^{-2} \widetilde{\eta}_{\delta}(\cdot, \delta^{-1}\cdot) + \frac{\overline{y}_{2,\delta}''\chi - \tau \overline{y}_{2,\delta}'\Delta \chi + \overline{y}_{2,\delta}'(\widetilde{\xvec{u}}_{\delta} \cdot \xnab) \chi}{\int_{\mathscr{C}}\chi(x_2) \, \xdx{\xvec{x}}},
		\end{equation}
		where $\chi$ is from \eqref{equation:chi} and~$\widetilde{\xvec{u}}_{\delta}$ is the solution to
		\begin{gather*}
			\xwcurl{\widetilde{\xvec{u}}_{\delta}}(\cdot, t) = S^1_{t}(w_0, \theta_0,  \varphi,  \psi + \delta^{-2} \widetilde{\eta}_{\delta}(\cdot, \delta^{-1}\cdot) + \overline{y}_{2,\delta}''), \\  
			\xdiv{\widetilde{\xvec{u}}_{\delta}} = 0, \quad \widetilde{\xvec{u}}_{\delta}|_{\Gamma} \cdot \xvec{n} = 0, \quad
			\int_{\mathscr{C}} \widetilde{\xvec{u}}_{\delta}(\xvec{x}, t) \cdot \xsym{\mathscr{g}} \, \xdx{\xvec{x}} = \overline{\xvec{y}}_{\delta}(t),
		\end{gather*}
		Next, we fix the controlled trajectory
		\begin{equation}\label{equation:controlledtrajectory}
			(w_{\delta}, \theta_{\delta})(\cdot, t) \coloneq S_{t}(w_0, \theta_0,  \varphi,  \psi + \eta_{\delta}), \quad t \in [0, \delta]
		\end{equation}
		and denote by $\xvec{u}_{\delta}$ the associated velocity.
		Now, due to the above constructions, we can verify that
		\[
			\xwcurl{\xvec{u}_{\delta}} = w_{\delta}, \quad  \xdiv{\xvec{u}_{\delta}} = 0, \quad  \xvec{u}_{\delta}|_{\Gamma} \cdot \xvec{n} = 0, \quad \int_{\mathscr{C}} \xvec{u}_{\delta}(\xvec{x}, t) \cdot \xsym{\mathscr{g}} \, \xdx{\xvec{x}} = \overline{\xvec{y}}_{\delta}(t)
		\]
		and
		\begin{equation}\label{equation:uequtilde}
			\xvec{u}_{\delta} = \widetilde{\xvec{u}}_{\delta}.
		\end{equation}
		Indeed, each $\widetilde{\eta}_{\delta}(\cdot, t)$ is average-free for almost all $t \in (0,1)$ because of \eqref{equation:definition_eta_delta}, and the second term in the right-hand sides of \eqref{equation:mc} ensures that (\cf~\cref{equation:restr,equation:vorticity})
		\begin{gather*}
			\theta_{\delta}(\cdot, t) = S^2_{t}(w_0, \theta_0,  \varphi,  \psi + \delta^{-2} \widetilde{\eta}_{\delta}(\cdot, \delta^{-1}\cdot)) + \frac{\overline{y}_{2,\delta}'\chi}{\int_{\mathscr{C}}\chi(x_2) \, \xdx{\xvec{x}}}, \\
			\int_{\mathscr{C}} \xvec{u}_{\delta}(\xvec{x}, t) \cdot \xsym{\mathscr{g}} \, \xdx{\xvec{x}} = \int_0^t\int_{\mathscr{C}} \theta_{\delta}(\xvec{x}, s) \, \xdx{\xvec{x}} \xdx{s} = \overline{y}_{2,\delta}(t).
		\end{gather*}
		In order to confirm \eqref{equation:uequtilde}, one notes that $\overline{\xvec{y}}_{\delta}'(t) - \overline{\xvec{y}}_{\delta}'(t)\chi/\smallint_{\mathscr{C}} \chi(\xvec{z})\,\xdx{\xvec{z}}$ is curl-free and average-free for $t\in[0,\delta]$, implying $\overline{\xvec{y}}_{\delta}' - \overline{\xvec{y}}_{\delta}'\chi/\smallint_{\mathscr{C}} \chi(\xvec{z})\,\xdx{\xvec{z}} = \nabla q$ with smooth~$q$. Hence,~$\xvec{u}_{\delta}$ and~$\widetilde{\xvec{u}}_{\delta}$ solve the same velocity equation with identical data. See also \Cref{remark:return}.
		\begin{rmrk}\label{remark:tc}
			The definition in \eqref{equation:mc} has the effect, that if $\rho$ solves $\partial_t\rho - \tau\Delta\rho + (\widetilde{\xvec{u}}_{\delta} \cdot \xnab) \rho = \delta^{-2} \widetilde{\eta}_{\delta}(\cdot, \delta^{-1}\cdot)$, then $\widetilde{\rho} \coloneq \rho + \chi \overline{y}_{2,\delta}'$ obeys $\partial_t\widetilde{\rho} - \tau\Delta\widetilde{\rho} + (\widetilde{\xvec{u}}_{\delta} \cdot \xnab) \widetilde{\rho} = \eta_{\delta}$.
		\end{rmrk}

		\paragraph{Step 3. Asymptotic expansions.} Given the trajectory $(w_{\delta}, \theta_{\delta})$ defined in \eqref{equation:controlledtrajectory}, we make on the time interval $[0, \delta]$ an ansatz of the form
		\begin{equation}\label{equation:ansatz}
			\begin{gathered}
				w_{\delta} = z_{\delta} + q_{\delta}, \quad \xvec{u}_{\delta} = \overline{\xvec{y}}_{\delta} + \xvec{Z}_{\delta} + \xvec{Q}_{\delta}, \quad	\theta_{\delta} = \vartheta_{\delta} + \frac{\overline{y}_{2,\delta}'\chi}{\int_{\mathscr{C}}\chi(x_2) \, \xdx{\xvec{x}}} + r_{\delta},
			\end{gathered}
		\end{equation}
		where 
		\[
			z_{\delta}(\cdot,t) \coloneq \widetilde{v}_{\delta}(\cdot, \delta^{-1}t), \quad \vartheta_{\delta}(\cdot,t) \coloneq \delta^{-1}\widetilde{\vartheta}_{\delta}(\cdot,\delta^{-1}t)
		\]
		are determined through \eqref{equation:wtvt} and \eqref{equation:wtvtvv}, while the vector fields~$\xvec{Z}_{\delta}$ and~$\xvec{Q}_{\delta}$ are the unique functions satisfying
		\begin{equation*}\label{equation:dcZQ}
			\begin{gathered}
				\xwcurl{\xvec{Z}_{\delta}} = z_{\delta}, \quad \xdiv{\xvec{Z}_{\delta}} = 0, \quad \xvec{Z}_{\delta}|_{\Gamma} \cdot \xvec{n} = 0, \quad  \int_{\mathscr{C}} \xvec{Z}_{\delta}(\xvec{x}, t) \cdot \xsym{\mathscr{g}} \, \xdx{\xvec{x}} = 0,\\
				\xwcurl{\xvec{Q}_{\delta}} = q_{\delta}, \quad \xdiv{\xvec{Q}_{\delta}} = 0, \quad \xvec{Q}_{\delta}|_{\Gamma} \cdot \xvec{n} = 0, \quad \int_{\mathscr{C}} \xvec{Q}_{\delta}(\xvec{x}, t) \cdot \xsym{\mathscr{g}} \, \xdx{\xvec{x}} = 0.
			\end{gathered}
		\end{equation*}
		It should be emphasized that, due to the boundary data of~$\theta_0$ and~$\theta_1$, and the Dirichlet condition for $w_0$, one has $z_{\delta} |_{\Gamma} = 0$ and $\partial_{\xvec{n}}\vartheta_{\delta} |_{\Gamma} = 0$; see also~\Cref{remark:nm}, \cref{equation:intag,equation:wtvt,equation:wtvtvv}.
		In view of \eqref{equation:intag}, \eqref{equation:ansatz}, and the compact support of~$\overline{\xvec{y}}$, it remains to show
		\begin{equation}\label{equation:hdsgoal}
			\|q_{\delta}(\cdot, \delta)\|_{1} + \|r_{\delta}(\cdot, \delta)\|_{2} \longrightarrow 0 \mbox{ as } \delta \longrightarrow 0.
		\end{equation}
		The limit \eqref{equation:hdsgoal} can be verified via energy estimates for the equations satisfied by~$q_{\delta}$ and~$r_{\delta}$, namely
		\begin{equation}\label{equation:remainders}
			\begin{gathered}
				\partial_t q_{\delta} - \nu \Delta q_{\delta} + \left((\overline{\xvec{y}}_{\delta} + \xvec{Z}_{\delta} + \xvec{Q}_{\delta}) \cdot \xnab\right) q_{\delta} + (\xvec{Q}_{\delta} \cdot \xnab) z_{\delta} = F_{\delta} + \partial_1 r_{\delta}, \\
				\partial_t r_{\delta} - \tau\Delta r_{\delta} + \left((\overline{\xvec{y}}_{\delta} + \xvec{Z}_{\delta} + \xvec{Q}_{\delta}) \cdot \xnab\right) r_{\delta} + (\xvec{Q}_{\delta} \cdot \xnab) \vartheta_{\delta} = G_{\delta},\\
				q_{\delta}(\cdot, 0) = 0, \quad r_{\delta}(\cdot, 0) = 0, \quad q_{\delta} |_{\Gamma} = 0, \quad \partial_{\xvec{n}}r_{\delta} |_{\Gamma} = 0,
			\end{gathered}
		\end{equation}
		where $F_{\delta} \coloneq \varphi - (\xvec{Z}_{\delta} \cdot \xnab) z_{\delta} + \nu \Delta z_{\delta}$ and  $G_{\delta} \coloneq \psi - (\xvec{Z}_{\delta} \cdot \xnab) \vartheta_{\delta} + \tau \Delta \vartheta_{\delta}$.
		To obtain \eqref{equation:remainders}, one inserts \cref{equation:ansatz,equation:controlledtrajectory} into \eqref{equation:vorticity}, and further utilizes that $\partial_1 \chi \equiv 0$ by \eqref{equation:chi}, as well as the expression of~$\eta_{\delta}$ from \eqref{equation:mc} in conjunction with \eqref{equation:uequtilde}.
		\paragraph{Step 4. Estimates.}
		Let $t \in [0, \delta]$, and recall the Sobolev embeddings $\xHone(\mathscr{C}; \mathbb{R}) \subset \xLfour(\mathscr{C}; \mathbb{R})$ and $\xHtwo(\mathscr{C}; \mathbb{R}) \subset \xLinfty(\mathscr{C}; \mathbb{R})$. We multiply in \eqref{equation:remainders} respectively with $(-\Delta)^iq_{\delta}$ and $(-\Delta)^ir_{\delta}$ for $i = 0, 1$, and then use integration by parts. By employing the boundary values of the involved functions,~\eqref{equation:dce}, and \eqref{equation:LaplH2}, we find
		\begin{multline}\label{equation:e1}
			\| q_{\delta} (\cdot, t)\|_1^2 + \| r_{\delta} (\cdot, t)\|_1^2 + (1 - \ell)\int_0^t \left(\| q_{\delta}(\cdot, s) \|_2^2 + \| r_{\delta}(\cdot, s) \|_2^2\right) \, \xdx{s} \\
			\begin{aligned}
				& \lesssim \int_0^t  \ell^{-1} \left(\|F_{\delta}(\cdot, s)\|^2 + \|G_{\delta}(\cdot, s)\|^2 + \|q_{\delta}(\cdot, s)\|_1^2 + \|r_{\delta}(\cdot, s)\|_1^2\right) \, \xdx{s} \\
				& \quad + \int_0^t \ell^{-1}\left(\|q_{\delta}(\cdot, s)\|_1^4  + \|r_{\delta}(\cdot, s)\|_1^4 + \|\vartheta_{\delta}(\cdot, s)\|_1^4\right)  \, \xdx{s} \\
				& \quad + \int_0^t \left(\|z_{\delta}(\cdot, s)\|_2 + |\overline{\xvec{y}}_{\delta}(\cdot, s)|\right)\left(\|q_{\delta}(\cdot, s)\|_1^2 + \|r_{\delta}(\cdot, s)\|_1^2\right)  \, \xdx{s},
			\end{aligned}
		\end{multline}
		where $\ell \in (0,1)$ is fixed independently of $\delta$. Next, after acting in the equation for~$r_{\delta}$ in \eqref{equation:remainders} with $\partial_i$, where~$i \in \{1,2\}$, it follows that
		\begin{multline*}
			\partial_t \partial_i r_{\delta} - \tau \Delta \partial_i r_{\delta}  + \left((\overline{\xvec{y}}_{\delta} + \xvec{Z}_{\delta} + \xvec{Q}_{\delta}) \cdot \xnab\right) \partial_i r_{\delta} + (\xvec{Q}_{\delta} \cdot \xnab) \partial_i\vartheta_{\delta} \\ = \partial_i G_{\delta} - \left((\partial_i\xvec{Z}_{\delta} + \partial_i\xvec{Q}_{\delta}) \cdot \xnab\right) r_{\delta} - (\partial_i\xvec{Q}_{\delta} \cdot \xnab) \vartheta_{\delta}.
		\end{multline*}
		Multiplying with $-\Delta \partial_i r_{\delta}$, integrating by parts, and using the known boundary values of the involved functions, we obtain together with \eqref{equation:e1} that
		\begin{multline*}
			\| q_{\delta} (\cdot, t)\|_1^2 + \| r_{\delta} (\cdot, t)\|_2^2 + (1 - \ell)\int_0^t \left(\| q_{\delta}(\cdot, s) \|_2^2 + \| r_{\delta}(\cdot, s) \|_3^2 \right) \, \xdx{s} \\
			\begin{aligned}
				& \lesssim \int_0^t  \ell^{-1} \left(\|F_{\delta}(\cdot, s)\|^2 + \|G_{\delta}(\cdot, s)\|_1^2 + \|q_{\delta}(\cdot, s)\|_1^2 + \|r_{\delta}(\cdot, s)\|_2^2\right) \, \xdx{s} \\
				& \quad + \int_0^t \ell^{-1}\left(\|q_{\delta}(\cdot, s)\|_1^4  + \|r_{\delta}(\cdot, s)\|_2^4 + \|\vartheta_{\delta}(\cdot, s)\|_2^4\right)  \, \xdx{s} \\
				& \quad + \int_0^t \left(\|z_{\delta}(\cdot, s)\|_2 + |\overline{\xvec{y}}_{\delta}(\cdot, s)|\right)\left(\|q_{\delta}(\cdot, s)\|_1^2 + \|r_{\delta}(\cdot, s)\|_2^2\right)  \, \xdx{s},
			\end{aligned}
		\end{multline*}
		with fixed $\ell \in (0,1)$ independent of $\delta$. Moreover, the forces $\varphi = \xwcurl{\xsym{\Phi}}$ and $\psi$ are fixed in \Cref{theorem:main}, and the norms $\|z_{\delta}(\cdot, t)\|_{2}$ and $\|\vartheta_{\delta}(\cdot, t)\|_{3}$ are for $t \in [0,\delta]$ bounded by an absolute constant. Thus, by noting that
		\begin{gather*}
			\|(\xvec{Z}_{\delta} \cdot \xnab) z_{\delta}(\cdot, t)\| \leq \|z_{\delta}(\cdot, t)\|_{1}^2, \quad \|\Delta z_{\delta}(\cdot, t)\| \leq \|z_{\delta}(\cdot, t)\|_{2},\\
			\|(\xvec{Z}_{\delta} \cdot \xnab) \vartheta_{\delta}(\cdot, t)\|_{1} \leq \|z_{\delta}(\cdot, t)\|_{1}^2 + \|\vartheta_{\delta}(\cdot, t)\|_{2}^2, \quad \|\Delta \vartheta_{\delta}(\cdot, t)\|_{1} \leq \|\vartheta_{\delta}(\cdot, t)\|_{3},
		\end{gather*}
		we can deduce (using the substitution $s = \delta \sigma$)
		\begin{gather*}
			\lim\limits_{\delta \to 0} \int_0^{\delta} \left(\|F_{\delta}(\cdot, s)\| + \|G_{\delta}(\cdot, s)\|_{1}\right) \, \xdx{s} = 0,\\
			\int_0^t \left(\|z_{\delta}(\cdot, s)\|_2 + |\overline{\xvec{y}}_{\delta}(\cdot, s)|\right) \, \xdx{s} = \int_0^1 \left(\delta\|\widetilde{v}_{\delta}(\cdot, \sigma)\|_2 + |\overline{\xvec{y}}(\sigma)|\right) \, \xdx{\sigma} \leq M_{\delta},
		\end{gather*}
		where $(M_{\delta})_{\delta \in (0,1)}$ is a bounded family. In particular, thanks to \eqref{equation:tvartassymptassump}, one has $\lim_{\delta\to 0}M_{\delta} = \sup_{t \in [0,1]} |\overline{\xvec{y}}(t)|$. In conclusion, using Gr\"onwall's inequality, 
		\begin{equation*}
			\|q_{\delta}(\cdot, t)\|^2_1 + \|r_{\delta}(\cdot, t)\|^2_2
			\lesssim c_{\delta} + \int_0^t \| \left(q_{\delta}(\cdot, s) \|_1^4 + \| r_{\delta}(\cdot, s) \|_2^4\right) \, \xdx{s}, 
		\end{equation*}
		where $\lim_{\delta \to 0} c_{\delta} = 0$. Like in the proof of \Cref{theorem:LCST}, comparing with $f'/f^2 \leq C$ for an absolute constant $C > 0$, the limit \eqref{equation:hdsgoal} follows.
	\end{proof}

	\subsection{Conclusion}\label{subsection:conclusion}
	To conclude \Cref{theorem:main}, it suffices, in view of the estimate \eqref{equation:dce}, to obtain for any $\varepsilon > 0$ a control $\eta \in \xCinfty(\mathscr{C}\times[0,T])$ with $\operatorname{supp}(\eta) \subset \omegaup\times(0,T)$ such that the corresponding solution to \eqref{equation:Boussinesq} obeys
	\begin{equation*}\label{equation:controllabilityproperty_vorticity_average}
		\|(\xwcurl{\xvec{u}})(\cdot, T) - w_T\|_{1} + \|\theta(\cdot, T) - \theta_T\|_{2} + |\int_{\mathscr{C}} \xvec{u}(\xvec{x}, T) \cdot \xsym{\mathscr{g}} \, \xdx{\xvec{x}}|  < \varepsilon,
	\end{equation*}
	where $w_T \coloneq \xwcurl{\xvec{u}}_T$. 
	
	In what remains, we specify a control~$\eta$, and the corresponding controlled trajectory, on respective time intervals $[0, T_1]$, $[T_1, T_2]$, $[T_2, T_3]$, and $[T_3, T]$, with $0 < T_1 < T_2 < T_3 < T$.

	\paragraph{Step 1. Smoothing.} Take $T_1 \in (0, T)$ such that the corresponding solution $(w, \theta)$ to~\eqref{equation:vorticity} with \enquote{zero control}, \ie, $\eta = 0$, satisfies 
	\[
		(w_{T_1}, \theta_{T_1}) \coloneq (w(\cdot, T_1), \theta(\cdot, T_1)) \in \xHn{{2}}_{0} \times \xHn{{3}}_{\operatorname{N}}.
	\]
	As mentioned in Remarks~\Rref{remark:r0} and~\Rref{remark:r}, the assumptions on $(\xsym{\Phi}, \psi)$ in \Cref{theorem:main} allow us to access this parabolic smoothing for $(\xvec{u}, \theta)$ up to the regularity
	\[
		\smash{\xLinfty((0,T);\xHtwo(\mathscr{C};\mathbb{R}^{2+1}))\cap\xLtwo((0,T);\xHn{3}(\mathscr{C};\mathbb{R}^{2+1}))},
	\]
	with the details being provided, \eg, by \cite[Lemma 2.1]{Chaves-SilvaEtal2023} without external forces; if one would consider smooth external forces, for example, $(\xsym{\Phi}, \psi) = (\xsym{0}, 0)$, one would obtain solutions that are smooth for $t > 0$. See also \cite[Remark 3.2]{Temam1982} for such an argument taking into account external forces, but for the case of the no-slip boundary condition. 
	
	Owing to the global wellposedness of \eqref{equation:Boussinesq}, we take~$T_1 \in (0, T)$ so close to~$T$ that every uncontrolled trajectory of~\eqref{equation:vorticity} starting at~$t = \widetilde{T}$ inside the~$2\varepsilon/3$-ball with center $(w_T, \theta_T)$ in $\xHn{{1}}(\mathscr{C}; \mathbb{R})\times\xHn{2}(\mathscr{C}; \mathbb{R})$ will not cross the boundary of the~$\varepsilon$-ball with center $(w_T, \theta_T)$ in $\xHn{{1}}(\mathscr{C}; \mathbb{R})\times\xHn{2}(\mathscr{C}; \mathbb{R})$ until~$t = \widetilde{T} + (T-T_1)$.
	\paragraph{Step 2. Reaching a special temperature state.} Let $\xi \in \xCinfty(\mathscr{C}; \mathbb{R})$ be any average-free profile with $\partial_1\xi |_{\Gamma} = \partial_{111}\xi|_{\Gamma} = 0$ and so that
	\[
		\|(w_{T_1} - \partial_1\xi) - w_T\|_{1} < \varepsilon/3.
	\]
	Such a choice is possible due to the boundary conditions satisfied by~$w(\cdot, T_1)$ and~$w_T$, and by the density of $\xCinfty_0(\mathscr{C}; \mathbb{R})$ in
	$\xHn{{1}}_{0}$, where $\xCinfty_0(\mathscr{C}; \mathbb{R})$ denotes the smooth function with compact support in~$\mathscr{C}$. Indeed, by density, we first we take $h \in \xCinfty_0(\mathscr{C}; \mathbb{R})$ with $\|(w_{T_1} - h) - w_T\|_{1} < \varepsilon/3$. Next, we integrate $h$ in the $x_1$-direction and subsequently correct the average; namely
	\begin{equation*}\label{equation:xiconstr}
		\begin{gathered}
			\widehat{\xi}(x_1, x_2) \coloneq \int_{-1}^{x_1} h(s, x_2) \, \xdx{s}, \quad
			\xi \coloneq \widehat{\xi} - \int_{\mathscr{C}} \widehat{\xi}(\xvec{x}) \, \xdx{\xvec{x}},
		\end{gathered}
	\end{equation*}
	emphasizing that $\partial_1 \xi |_\Gamma = \partial_{111} \xi |_\Gamma = 0$, and that~$\xi$ has zero average.
	Now, using \Cref{theorem:HDS} and the smoothing property, we fix a small $\widetilde{\delta} \in (0, T-T_1)$ and a control $\eta \in \xCinfty(\mathscr{C}\times[0,T])$ such that the corresponding solution $(\xvec{u}, w, \theta)$ to~\eqref{equation:vorticity} starting from $t = T_1$ with data $(w_{T_1}, \theta_{T_1})$ satisfies at the time $T_2 \coloneq T_1 + \widetilde{\delta}$ the conditions
	\begin{equation*}\label{equation:rr}
		\begin{gathered}
			\|w_{T_2} - w_{T_1}\|_{1} < \beta, \quad \|\theta_{T_2} - \gamma^{-1}\xi\|_2 < \kappa, \quad \int_{\mathscr{C}} \xvec{u}_{T_2} \cdot \xsym{\mathscr{g}} \, \xdx{\xvec{x}} = 0,\\
			(\xvec{u}_{T_2}, w_{T_2}, \theta_{T_2}) \coloneq (\xvec{u}, w, \theta)(\cdot, T_2)) \in \xHn{{2}}_{0} \times \xHn{{3}}_{\operatorname{N}},
		\end{gathered}
	\end{equation*}
	where $\beta > 0$, $\gamma > 0$, and $\kappa > 0$ are selected so small that one has \eqref{equation:initialdatacontrolresult} with~$\varepsilon/2$ when applying \Cref{theorem:LCST} with initial data $(w_{T_2}, \theta_{T_2})$ and $\delta = \gamma$ and $T_2 + \gamma < T$.
	\paragraph{Step 3. Controlling the vorticity.}
	\Cref{theorem:LCST} provides $T_3 \in (T_2, T)$, \eg, $T_3 = T_2 + \gamma$, such that without using any control, the solution $(\xvec{u}, w, \theta)$ to~\eqref{equation:vorticity} on the interval $[T_2, T_3]$ issued at $t = T_2$ from $(w_{T_2}, \theta_{T_2})$, as obtained in the previous step, satisfies
	\[
		\|w(\cdot, T_3) - w_T\|_{1} < \varepsilon/2, \quad \int_{\mathscr{C}} \xvec{u}(\xvec{x}, T_3) \cdot \xsym{\mathscr{g}} \, \xdx{\xvec{x}} = 0.
	\] 

	\paragraph{Step 4. Correcting the temperature.}
	Finally, another application of \Cref{theorem:HDS} provides a time $T_4 \in (T_3, T)$ such that the solution $(w, \theta)$ to~\eqref{equation:Boussinesq} on the interval $[T_3, T_4]$ with initial data $w(\cdot, T_3)$ and $\theta(\cdot, T_3)$, as fixed above, obeys
	\[
		\|w(\cdot, T_4) - w_T\|_{1} + \|\theta(\cdot, T_4) - \theta_T\|_{2} < 2\varepsilon/3.
	\] 
	As $T_1$ was at the beginning chosen sufficiently close to $T$, the argument is complete.

	\bibliographystyle{alpha}
	\bibliography{Boussinesq}
	
\end{document}